\documentclass{article}
\usepackage{amsmath,amsthm,amssymb,amscd}



\newcommand{\ga}{\alpha}
\newcommand{\gb}{\beta}

\newcommand{\gd}{\delta}
\newcommand{\gw}{\omega}

\newcommand{\gS}{\Sigma}
\newcommand{\gs}{\sigma}

\newcommand{\coll}{\mathrm{Coll}}



\newcommand{\cantor}{2^\gw}


\newcommand{\dotxgen}{{\dot x}_{\mathit{gen}}}

\newcommand{\supp}{\mathrm{supp}}

\newcommand{\dom}{\mathrm{dom}}

\newcommand{\rng}{\mathrm{rng}}
\newcommand{\power}{\mathcal{P}}

\newtheorem{theorem}{Theorem}[section]

\newtheorem{claim}[theorem]{Claim}
\newtheorem{corollary}[theorem]{Corollary}

\newtheorem{proposition}[theorem]{Proposition}

\theoremstyle{definition}
\newtheorem{definition}[theorem]{Definition}
\newtheorem{example}[theorem]{Example}
\newtheorem{question}[theorem]{Question}


\title{Krull dimension in set theory\footnote{2010 AMS subject classification 03E15, 03E25, 03E35.}}

\author{
Jind{\v r}ich Zapletal\\
University of Florida}

\begin{document}
\maketitle

\begin{abstract}
For every number $n\geq 2$, let $\Gamma_n$ be the hypergraph on $\mathbb{R}^n$ of arity four consisting of all non-degenerate Euclidean rectangles. It is consistent with ZF+DC set theory that the chromatic number of $\Gamma_n$ is countable while that of $\Gamma_{n+1}$ is not.
\end{abstract}

\section{Introduction}

In culmination of several years of work, Schmerl~\cite{schmerl:avoidable} classified algebraic hypergraphs on Euclidean spaces according to how difficult it is to find countable coloring for them. It turns out that for every such a hypergraph $\Gamma$ there is a number $n\in\gw+1$ such that in ZFC, the assertion ``chromatic number of $\Gamma$ is countable" is equivalent to $2^{\aleph_0}\leq\aleph_n$ (if $n\in\gw$), or it is outright provable (if $n=\gw$). Moreover, this number $n=n(\Gamma)$ can be found algorithmically. 

Schmerl's success raises a question: is it possible to find a similar classification of algebraic hypergraphs with the base theory being the choiceless ZF+DC set theory? After all, coloring an algebraic hypergraph of any complexity requires copious amounts of the Axiom of Choice. 

In this paper, I make tangible progress towards such a classification. Using the methods of geometric set theory \cite{z:geometric}, I isolate a general approach for separating chromatic numbers of algebraic hypergraphs which differ by dimension, even when their dimension is high. It appears to be difficult to state a satisfactory general theorem, so I treat a remarkable special case. Given a number $n\geq 2$, let $\Gamma_n$ be the hypergraph on $\mathbb{R}^n$ of arity four consisting of all non-degenerate Euclidean rectangles. By a result of \cite{komjath:three}, in ZFC the chromatic number of every single of these hypergraphs being countable is a statement equivalent to the Continuum Hypothesis. In the choiceless theory ZF+DC, a more informative picture appears. 

\begin{theorem}
\label{maintheorem}
Let $n\geq 2$ be a number. It is consistent relative to an inaccessible cardinal that ZF+DC holds and the chromatic number of $\Gamma_n$ is countable while that of $\Gamma_{n+1}$ is not.
\end{theorem}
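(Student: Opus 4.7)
The plan is to apply the framework of balanced Suslin forcing from \cite{z:geometric} over a Solovay-style base model obtained by Levy-collapsing an inaccessible cardinal $\kappa$ to $\aleph_1$. The target is a symmetric submodel $W$ of ZF+DC containing a generic coloring of $\Gamma_n$ but no coloring of $\Gamma_{n+1}$.

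For the positive side, the natural forcing is a coloring poset $P_n$ whose conditions are countable partial functions $p\colon\R^n\to\gw$ such that the domain of $p$ carries no $p$-monochromatic non-degenerate rectangle, ordered by reverse extension. (Some care is needed in the exact formulation so that the density sets ``extend to contain a prescribed point'' remain dense; I would adapt Koml{\'a}th's recursion to the forcing context.) The poset is $\sigma$-closed, and the union of a generic filter is a total coloring of $\R^n$ witnessing $\chi(\Gamma_n)\leq\aleph_0$. The technical requirement is to fit $P_n$ into the balanced Suslin framework: one identifies the balanced virtual conditions as symmetric classes of total colorings $\bar c\colon\R^n\to\gw$ living in side generic extensions, and verifies balance via a rectangle-amalgamation lemma asserting that two such colorings in mutually generic side models, agreeing on the common part, can be amalgamated into a total coloring in the product extension. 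The countably many potential rectangle obstructions between the two sides are defeated one at a time using $\sigma$-closure.

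Once balance is established, the balanced extension $W$ satisfies $\chi(\Gamma_n)^W\leq\aleph_0$. The principal obstacle is to prove $\chi(\Gamma_{n+1})^W>\aleph_0$. Following the non-coloring templates of \cite{z:geometric}, suppose for contradiction that $f\colon\R^{n+1}\to\gw$ is a coloring in $W$. By symmetry, $f$ has a $P_n$-name $\dot f$ defined from countably many parameters together with the generic $\Gamma_n$-coloring. The aim is to produce four points $x_0,x_1,x_2,x_3\in\R^{n+1}$ forming a non-degenerate rectangle and an amalgamation of four balanced virtual conditions forcing $\dot f(x_0)=\dot f(x_1)=\dot f(x_2)=\dot f(x_3)$. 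The geometric heart of this step, and the principal obstacle, is that the extra coordinate in $\R^{n+1}$ allows the rectangle $\{x_i\}$ to be chosen so that its projection to $\R^n$ is \emph{not} a rectangle (indeed, a generic tilted affine $2$-plane in $\R^{n+1}$ projects rectangles to non-rectangular parallelograms); hence a common color can be assigned to the $\R^n$-projections of the $x_i$ without creating a $\Gamma_n$-monochromatic rectangle, which is precisely what balance-preservation needs. Making this geometric separation rigorous, together with the continuity of $\dot f$ in its parameters so that the amalgamated condition actually forces the four values to agree, is where I expect the bulk of the technical effort to lie.
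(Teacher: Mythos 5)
Your high-level architecture matches the paper's: a $\sigma$-closed balanced Suslin coloring poset $P_n$ over the choiceless Solovay model, with balanced virtual conditions classified by total $\Gamma_n$-colorings, and the negative side reduced to amalgamating four conditions below a balanced virtual condition so as to force a non-degenerate rectangle into (a nonmeager color class of) any putative $\Gamma_{n+1}$-coloring. But both of the load-bearing ideas are missing. On the positive side, ``reverse extension'' plus ``defeating countably many rectangle obstructions one at a time by $\sigma$-closure'' cannot work: once $p_0\in V[G_0]$ and $p_1\in V[G_1]$ are fixed conditions below the virtual condition, whether $p_0\cup p_1$ is a $\Gamma_n$-coloring is already determined, and the dangerous configuration --- a rectangle with two vertices in $\dom(p_0)\setminus V$ and two in $\dom(p_1)\setminus V$ --- must be excluded by the \emph{definition of the ordering}, not by a recursion. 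The paper does this by attaching to each condition a countable real closed field $\supp(p)$, introducing the equivalence relations $E_F$ (two points related if each is carried to the other by an $F$-visible polynomial map between proper $F$-visible algebraic sets), and requiring $q\leq p$ to be injective with range in a tall Borel ideal $I$ on each $E_{\supp(p)}$-class; Proposition~\ref{equiproposition} then shows the two offending vertices on one side are $E$-equivalent over the other side's field, and the ordering forces them to get distinct colors.

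On the negative side, your proposed separation mechanism --- that a generic rectangle in $\mathbb{R}^{n+1}$ projects to a non-rectangle in $\mathbb{R}^n$ --- is not the paper's mechanism and I do not see how to make it work: a name for a $\Gamma_{n+1}$-coloring in the extension has no reason to factor through coordinate projections, so nothing about projections of the chosen rectangle bears on whether the four derived conditions amalgamate. What actually separates $n$ from $n+1$ is Krull dimension. The paper defines $\dim(V[G_0]/V[G_1])$ via analytic Noetherian topologies, shows (via the Hilbert basis theorem and Theorem~\ref{neatproposition}) that the four vertex-models of a generic rectangle in $\mathbb{R}^m$ have dimension characteristic $\langle\infty,m,1\rangle$, and proves that the balanced conditions of $P_n$ are $\langle n+1,n+1,1\rangle$-dimensionally balanced: the amalgamation of four conditions succeeds exactly when $\dim(V[G_i,G_j]/V[G_k])\geq n+1$, because Proposition~\ref{iproposition} (resting on the Krull dimension $n$ of the topology generated by restrictions of polynomial maps to proper algebraic subsets of $\mathbb{R}^n$) then rules out a rectangle with one vertex in each $\dom(p_i)\setminus V$. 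A generic rectangle in $\mathbb{R}^{n+1}$ meets this threshold and so lands inside every nonmeager set (Theorem~\ref{preservationtheorem}), killing any countable $\Gamma_{n+1}$-coloring, while a generic rectangle in $\mathbb{R}^n$ only achieves characteristic $\langle\infty,n,1\rangle$ and the generic $\Gamma_n$-coloring survives. Without some substitute for this dimension count, your outline does not yet contain an argument that distinguishes $\Gamma_n$ from $\Gamma_{n+1}$.
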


\noindent Unlike many purely combinatorial independence proofs in ZFC, the proof of Theorem~\ref{maintheorem} must consider algebraic structure of the real line, in particular the Hilbert basis theorem and the Krull dimension of the topology of algebraic subsets of $\mathbb{R}^n$. Many similar theorems can be proved on what at the moment is a case by case basis. The method introduced in this paper is not universally applicable. For example, the following question appears to require a different approach.

\begin{question}
Let $n\geq 2$ be a number. Writing $\Delta_n$ for the hypergraph of arity four of parallelograms on $\mathbb{R}^n$, is it consistent with ZF+DC that the chromatic number of $\Delta_n$ is countable while that of $\Delta_{n+1}$ is not?
\end{question}

\noindent The paper uses the set theoretic notational standard of \cite{jech:newset}. The calculus of balanced forcing is developed in \cite{z:geometric}. For purposes of this paper, a rectangle in a Euclidean space is always identified with the set of its four extreme points. A non-degenerate rectangle is one which has four extreme points, as opposed to degenerate rectangles which may only have one or two extreme points. Given a set $X$, a hypergraph $\Gamma$ on $X$ is just a collection of finite subsets of $X$, its elements are called hyperedges. A $\Gamma$-coloring is a (possibly partial) map $c$ on $X$ such that there is no $\Gamma$-hyperedge which is a subset of $\dom(c)$ and on which $\Gamma$ is constant. The chromatic number of $\Gamma$ is countable if there is a total $\Gamma$-coloring whose range is a subset of $\gw$. DC denotes the Axiom of Dependent Choices.

\section{Krull dimension}

\noindent In \cite{z:noetherian}, the notion of Noetherian topology is analyzed in descriptive set theoretic context. The following definition is central.

\begin{definition}
Let $X$ be a Polish $K_\gs$ space and let $\mathcal{T}$ be a topology on $X$ different from the original Polish one. Say that $\mathcal{T}$ is an \emph{analytic Noetherian topology} if

\begin{enumerate}
\item $\mathcal{T}$ is Noetherian. That is, there is no infinite strictly decreasing sequence of $\mathcal{T}$-closed sets;
\item $\mathcal{T}$ is analytic. That is, every $\mathcal{T}$-closed set is closed in the Polish topology, and the collection of $\mathcal{T}$-closed sets is analytic in the Effros Borel space on $X$.
\end{enumerate}
\end{definition}

\noindent The restriction to $K_\gs$ spaces serves to establish the Borelness of the union and intersection operations and the subset relation on closed sets as in \cite[Section 12.C]{kechris:classical}, accommodating the necessary complexity computations and absoluteness arguments.  
In this paper, I show that it is useful to consider the usual notion of dimension of Noetherian spaces, for use in certain choiceless independence results. Recall:

\begin{definition}
Let $X$ be a set and $\mathcal{T}$ be a Noetherian topology on it. A $\mathcal{T}$-closed set $C\subset X$ is \emph{irreducible} if it is not the union of a finite collection of $\mathcal{T}$-closed proper subsets.
\end{definition}

\noindent It is well-known and easy to prove that every $\mathcal{T}$-closed set $C$ can be written in a unique way as a union of a finite collection of nonempty irreducible sets, so-called irreducible components of $C$. For an analytic Noetherian topology $\mathcal{T}$ and its closed set $C\subset X$, its irreducibility is a coanalytic statement, therefore absolute between transitive models of set theory.

\begin{definition}
Let $X$ be a set and $\mathcal{T}$ be a Noetherian topology on it. The \emph{Krull dimension} of $\mathcal{T}$ is the supremum of the lengths of finite chains of nonempty irreducible $\mathcal{T}$-closed sets linearly ordered by strict inclusion, minus one.
\end{definition}

\noindent This standard definition may only yield values of dimension between $0$ and $\gw$ inclusive. It is not difficult to see that if $\mathcal{T}$ is an analytic Noetherian topology on a $K_\gs$ Polish space $X$, then the set of reducible $\mathcal{T}$-closed sets is analytic. The statement ``the Krull dimension of $\mathcal{T}$ is $n$'' is a conjunction of a $\mathbf{\Pi}^1_2$ and a $\mathbf{\gS}^1_2$-statements and as such absolute among all forcing extensions. While there are many interesting examples of Noetherian spaces with prescribed dimensions, this paper is limited to the following:

\begin{example}
\label{example1}
Let $n\geq 1$ and let $\mathcal{T}$ be the topology of algebraic sets on $\mathbb{R}^n$. Then $\mathcal{T}$ is analytic Noetherian and has Krull dimension $n$. This is the main corollary of the Hilbert Basis Theorem.
\end{example}

\noindent The following minor variation comes very handy in this paper.

\begin{example}
\label{example2}
Let $n\geq 1$ and let $\mathcal{T}$ be the topology on $\mathbb{R}^n\times\mathbb{R}$ whose closed sets are (the whole space and) finite unions of sets of the form $f\restriction A$, where $A\subset\mathbb{R}^n$ is an algebraic set distinct from $\mathbb{R}^n$, and $f$ is a polynomial function from $\mathbb{R}^n$ to $\mathbb{R}$. Then $\mathcal{T}$ is Noetherian of Krull dimension $n$. To see this, note that the set of generators is closed under intersections: $(f_0\restriction A_0)\cap (f_1\restriction A_1)=f_0\restriction (A\cap B\cap \{x\in\mathbb{R}^n\colon f_0(x)= f_1(x)\})$ and the last set in the intersection is algebraic. Every decreasing sequence of $\mathcal{T}$-indecomposable sets starts possibly with $\mathbb{R}^n\times\mathbb{R}$ and continuous with $f\restriction A_i$ for some fixed polynomial function $f\colon\mathbb{R}^n\to\mathbb{R}$ and indecomposable algebraic sets $A_i\subset\mathbb{R}^n$ distinct from the whole space. As such, it can have length at most $n+1$, and the Krull dimension of $\mathcal{T}$ is $n$. Note that if polynomial functions on the whole space $\mathbb{R}^n$ were allowed as generators of $\mathcal{T}$, the Krull dimension of the topology would increase to $n+1$.
\end{example}

\noindent The main feature connecting Krull dimension with forcing is the following mysterious theorem.

\begin{theorem}
\label{neatproposition}
Let $n\geq 2$ be a number and $\mathcal{T}$ be an analytic Noetherian topology on a $K_\gs$-Polish space $X$ of Krull dimension less than $n$. Let $\langle V[G_i]\colon i\in n\rangle$ be a tuple of forcing extensions such that for any sets $b_0, b_1\subset n$, $V[G_i\colon i\in b_0]\cap V[G_i\colon i\in b_1]=V[G_i\colon i\in b_0\cap b_1]$. For every point $x\in X$ there is $i\in n$ such that the smallest $\mathcal{T}$-closed set coded in $V[G_i]$ containing $x$ is coded in $V$.
\end{theorem}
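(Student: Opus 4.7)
The plan is to proceed by induction on $n$. For each subset $b\subseteq n$, write $W_b$ for the smallest $\mathcal{T}$-closed set coded in $V[G_i:i\in b]$ containing $x$; this exists by Noetherianity (take a finite subintersection) and is irreducible, since otherwise $x$ would lie in one of its proper closed components, violating minimality. The key observation, used repeatedly below, is that if $W_{b_0}=W_{b_1}$ then $W_{b_0\cap b_1}=W_{b_0}$: the common value is coded in both $V[G_i:i\in b_0]$ and $V[G_i:i\in b_1]$, hence by the hypothesis on the generic extensions it is coded in $V[G_i:i\in b_0\cap b_1]$, and minimality of $W_{b_0\cap b_1}$ supplies the equality.

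For the base case $n=2$, Krull dimension at most $1$ means no chain of irreducible $\mathcal{T}$-closed sets has more than two elements. The chain $W_\emptyset\supseteq W_{\{0\}}\supseteq W_{\{0,1\}}$ therefore contains an equality; if $W_\emptyset=W_{\{0\}}$ we are done, otherwise $W_{\{0\}}=W_{\{0,1\}}$. Applying the same reasoning to the symmetric chain through $W_{\{1\}}$, either $W_\emptyset=W_{\{1\}}$ (done), or $W_{\{1\}}=W_{\{0,1\}}$. In the remaining case $W_{\{0\}}=W_{\{0,1\}}=W_{\{1\}}$, and the key observation forces $W_{\{0\}}=W_\emptyset$.

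For the inductive step, let $n\geq 3$, assume the theorem for $n-1$, and suppose toward contradiction that $W_{\{i\}}\subsetneq W_\emptyset$ for every $i\in n$. For each such $i$, restrict $\mathcal{T}$ to the $\mathcal{T}$-closed set $W_{\{i\}}$; this restriction is an analytic Noetherian topology in $V[G_i]$ on the $K_\sigma$ Polish space $W_{\{i\}}$, and its Krull dimension is strictly less than $n-1$, because any chain in the restriction extends on top by $W_\emptyset\supsetneq W_{\{i\}}$ to a chain in $\mathcal{T}$ one element longer. View $V[G_i]$ as the new ground model with the $n-1$ generic extensions $V[G_i,G_j]$ for $j\in n\setminus\{i\}$; the distributive identity $(b_0\cup\{i\})\cap(b_1\cup\{i\})=(b_0\cap b_1)\cup\{i\}$ lifts the intersection property to this setting. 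The inductive hypothesis, applied to the point $x\in W_{\{i\}}$, then produces some $j=f(i)\neq i$ such that the smallest closed set of the restricted topology containing $x$ coded in $V[G_i,G_j]$ is already coded in $V[G_i]$; since antimonotonicity places $W_{\{i,j\}}$ inside $W_{\{i\}}$, this smallest set is $W_{\{i,j\}}$, and its being coded in $V[G_i]$ forces $W_{\{i,j\}}=W_{\{i\}}$ by minimality of $W_{\{i\}}$ in $V[G_i]$.

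The function $f:n\to n$ so obtained satisfies $f(i)\neq i$, so its iterates yield a cycle $i_0\to i_1\to\cdots\to i_{k-1}\to i_0$ of length $k\geq 2$. For each index $l$ modulo $k$, the equality $W_{\{i_l,i_{l+1}\}}=W_{\{i_l\}}$ combined with the antimonotonicity inclusion $W_{\{i_l,i_{l+1}\}}\subseteq W_{\{i_{l+1}\}}$ yields $W_{\{i_l\}}\subseteq W_{\{i_{l+1}\}}$; chaining around the cycle forces $W_{\{i_0\}}=W_{\{i_1\}}=\cdots=W_{\{i_{k-1}\}}$. Applying the key observation to $W_{\{i_0\}}=W_{\{i_1\}}$ with $i_0\neq i_1$ produces $W_{\{i_0\}}=W_\emptyset$, contradicting $W_{\{i_0\}}\subsetneq W_\emptyset$. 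The main technical obstacle in executing this plan is confirming that the restricted topology, the descent to $V[G_i]$, and the intersection property among $\{V[G_i,G_j]:j\neq i\}$ all mesh with the inductive hypothesis, which rests on the absoluteness of analytic Noetherianity and of Krull dimension noted earlier in this section.
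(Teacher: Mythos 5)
Your proof is correct, but it takes a genuinely different route from the paper's. The paper fixes $n$ and runs an induction on the cardinality of subsets $b\subseteq n$, building ``good sequences'' --- strictly decreasing chains $C_{b_0}\supsetneq C_{b_1}\supsetneq\cdots$ indexed by an increasing tower of subsets --- until it produces a chain of $n+1$ nonempty irreducible closed sets, contradicting the dimension bound directly. You instead induct on $n$ itself: you relativize the whole theorem to the closed subspace $W_{\{i\}}$ with $V[G_i]$ as the new ground model, observe that passing to a proper irreducible closed subspace drops the Krull dimension by at least one, and then close the argument with a fixed-point-free map $f$ and a cycle argument feeding into the same key intersection lemma (coded in two models implies coded in their intersection) that the paper also uses. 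Your approach is arguably more conceptual --- it makes explicit that the induction is really on dimension --- but it carries overhead that the paper's purely lattice-theoretic argument avoids: you must check that the restriction of $\mathcal{T}$ to $W_{\{i\}}$ is again an analytic Noetherian topology coded in $V[G_i]$ (fine, since closed subsets of $K_\sigma$ Polish spaces are $K_\sigma$ Polish and the subset relation is Borel in the Effros space), and, more delicately, that $\langle V[G_i,G_j]\colon j\neq i\rangle$ is a legitimate tuple of \emph{forcing extensions} of $V[G_i]$, which the stated hypotheses do not literally guarantee for an arbitrary tuple with the intersection property. In the paper's applications everything sits inside a single generic extension, where the factoring is standard, and your argument never uses genericity beyond the absoluteness of Noetherianity and irreducibility, so this is a repairable technicality rather than a gap --- but it is worth stating explicitly that the induction is really being carried out for the slightly more general class of tuples of models for which these absoluteness facts hold.
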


\noindent The intersection condition on the tuples of generic extensions is satisfied for example for a mutually generic tuple by the product forcing theorem. The point $x\in X$ does not have to belong to any of the models mentioned. Note that if $M$ is any generic extension and $x\in X$ is any point outside of $M$, the Noetherian property of the topology $\mathcal{T}$ shows that there indeed exists an inclusion-smallest $\mathcal{T}$-closed set coded in $M$ which contains $x$ as an element. In addition, this set must be irreducible since all of its composants are coded in $M$, and one of them must contain the point $x$.

\begin{proof}
Suppose that the models $V[G_i]$ for $i\in n$ and the point $x$ are given. For each set $b\subseteq n$ write $M_b=V[G_i\colon i\in b]$ and let $C_b\subset X$ be the smallest $\mathcal{T}$-closed set coded in $M_b$ such that $x\in C_b$. Note that $c\subseteq b$ implies $C_b\subseteq C_c$; nevertheless, the status of sharp inclusion is not immediately clear. Suppose towards a contradiction that the conclusion of the proposition fails, i.e.\ $C_0\neq C_{\{i\}}$ for any $i\in n$.

For a set $b\subseteq n$, a \emph{good sequence in $b$} is an inclusion-increasing sequence $\langle b_k\colon k\leq |b|\rangle$ of subsets of $b$ such that $|b_k|=k$ and such that the sets $C_{b_k}$ strictly decrease with $k$. By induction on the cardinality of the set $b\subseteq n$ I will show that there is a good sequence in $b$. The base case $|b|=1$ follows from the initial contradictory assumption. For the induction step, suppose that $b\subset n$ is a set of cardinality at least two such that the statement has been verified for all its proper subsets. Choose $i\in b$ and let $c=b\setminus \{i\}$. By the induction hypothesis applied to the set $c$, there must be $j\in c$ such that, writing $d=c\setminus \{j\}$, $C_d\neq C_c$ holds. Let $e=d\cup\{i\}$, so $d=e\cap c$.

It cannot be the case that $C_e=C_c$. To see that, note that in such a case, $C_c$ would be coded in both $M_e$ and $M_c$, and by the intersection condition on the generic extensions, it would be coded in the model $M_d$, contradicting the assumption that $C_d\neq C_c$. Thus, the set $C_e\cap C_c$ is a proper subset either of $C_c$ or of $C_e$. If the former case prevails, use the induction hypothesis on $c$ to find a good sequence in $c$ and add to it $C_b$, which is a subset of $C_c\cap C_e$ and therefore a proper subset of $C_c$. Thus, a good sequence in $b$ has been obtained. If the latter case prevails, just switch the role of $c$ and $e$ and construct a good sequence in $b$ just the same.

After the induction is complete, consider a good sequence $\langle b_k\colon k\leq n\rangle$ in the set $b=n$. The sets $C_{b_k}$ for $k\leq n$ form a strictly decreasing sequence of length $n+1$ consisting of irreducible sets, contradicting the assumption on the Krull dimension of the topology $\mathcal{T}$.
\end{proof}

\section{Pairs of generic extensions}

The paper \cite{z:noetherian} introduces the notion of mutually Noetherian pair of generic extensions. In this section, I develop a stratification of that concept which takes into account the Krull dimension of the topologies concerned.

\begin{definition}
Let $V[G_0]$, $V[G_1]$ be generic extensions. The \emph{Krull dimension of $V[G_0]$ over $V[G_1]$}, $\dim(V[G_0]/V[G_1])$, is

\begin{enumerate}
\item at least $1$ always;
\item at least $n$ (where $n\geq 2$ is a natural number) if for every $K_\gs$ Polish space $X$, every basic open set $O\subset X$, and every analytic Noetherian topology $\mathcal{T}$ on $X$ of Krull dimension less than $n$, both in the ground model, if $C$ is a $\mathcal{T}$-closed set coded in $V[G_0]$ with nonempty intersection with $O\cap V[G_1]$, then $C$ has nonempty intersection with $O\cap V$;
\item equal to $n$ if $\dim(V[G_0]/V[G_1])\geq n$ and $\dim(V[G_0]/V[G_1])\not\geq n+1$;
\item $\infty$ if $\dim(V[G_0]/V[G_1])\geq n$ holds for every natural number $n$.
\end{enumerate}
\end{definition}

\noindent Noetherian topologies of dimension $0$ are uninteresting, and that is why the counting starts at $n=1$. The case $n=2$ allows a simple characterization via the following proposition. Higher dimensions are more difficult to understand, and they are the bread and butter of this paper. I do not know if $\dim(V[G_0]/V[G_1])=\dim(V[G_1]/V[G_0])$ holds in general, but this interesting question is irrelevant for the purposes of this paper.

\begin{proposition}
\label{twoproposition}
Let $V[G_0]$, $V[G_1]$ be generic extensions. $\dim(V[G_0]/V[G_1])\geq 2$ holds if and only if $\power(\gw)\cap V[G_0]\cap V[G_1]=\power(\gw)\cap V$.
\end{proposition}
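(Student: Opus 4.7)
The forward direction is handled by contrapositive. Given $a \in \mathcal{P}(\gw) \cap V[G_0] \cap V[G_1]$ which is not in $V$, take $X = 2^\gw$ and let $\mathcal{T}$ be the topology on $X$ whose closed sets are the finite subsets of $X$ together with $X$ itself. This $\mathcal{T}$ is analytic Noetherian of Krull dimension $1$, since its irreducible closed sets are the singletons and $X$. Then $C = \{a\}$ is $\mathcal{T}$-closed and coded in $V[G_0]$, and for any basic open neighborhood $O$ of $a$ one has $a \in C \cap O \cap V[G_1]$ but $C \cap O \cap V = \emptyset$, witnessing $\dim(V[G_0]/V[G_1]) \not\geq 2$.

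For the converse, assume the $\mathcal{P}(\gw)$-intersection hypothesis and consider an instance of the $\dim \geq 2$ clause: an analytic Noetherian topology $\mathcal{T}$ of Krull dimension less than $2$ on a $K_\gs$ Polish space $X$, all in $V$, together with a $\mathcal{T}$-closed set $C$ coded in $V[G_0]$, a basic open $O$, and a point $x \in C \cap O \cap V[G_1]$. Replacing $C$ by the irreducible component containing $x$, I may assume $C$ is irreducible; if $C = X$ then any ground-model point of $O$ is a witness, so assume $C \subsetneq X$. The Krull dimension bound forces $C$ to be minimal in the sense of having no proper nonempty $\mathcal{T}$-closed subset, since any such subset would contain an irreducible component $E$ with $E \subsetneq C \subsetneq X$, a chain of three nonempty irreducibles. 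By absoluteness of irreducibility (coanalytic) and of Krull dimension, this minimality of $C$ persists in $V[G_0,G_1]$; in particular $C$ equals the $\mathcal{T}$-closure of $\{x\}$ as computed there.

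The crux is to argue that the Effros code $e_C$ of $C$, taken with respect to a fixed countable basis for $X$ in $V$, lies in both $V[G_0]$ and $V[G_1]$. Membership in $V[G_0]$ is automatic. For $V[G_1]$, I would introduce the function $m(y)$ assigning to each point the Effros code of its $\mathcal{T}$-closure, and observe that ``$m(y) = e$'' is a $\mathbf{\Pi}^1_2$ relation: its universal quantifier ranges over codes $e'$, and the body asserts that if $e'$ codes a $\mathcal{T}$-closed set $F'$ containing $y$ then $F_e \subseteq F'$ --- a $\mathbf{\gS}^1_1$ hypothesis (analyticity of $\mathcal{T}$) combined with a Borel hypothesis and a $\mathbf{\Pi}^1_1$ conclusion. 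Shoenfield absoluteness applied to the pair $V[G_1] \subseteq V[G_0,G_1]$ then yields $m^{V[G_1]}(x) = m^{V[G_0,G_1]}(x) = e_C$, placing $e_C$ into $V[G_1]$. The hypothesis forces $e_C \in V$, so $C$ is a Polish-closed set coded in $V$; the nonemptiness ``$C \cap O \neq \emptyset$'' is a $\mathbf{\gS}^1_1$ fact with parameter $e_C \in V$ witnessed by $x$ in $V[G_0,G_1]$, and Shoenfield absoluteness transports it down to a ground-model point $y \in C \cap O \cap V$. The main technical hurdle is justifying the Shoenfield absoluteness step cleanly, which requires that the Noetherian property and the Krull dimension bound be preserved in $V[G_1]$ and $V[G_0,G_1]$ --- a preservation that the analyticity of $\mathcal{T}$ and the absoluteness remarks preceding the proposition are designed to secure.
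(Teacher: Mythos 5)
Your forward direction is exactly the paper's argument (the paper uses the topology of finite sets on $\power(\gw)$ and applies the dimension clause to $C=\{a\}$), so that half is fine.

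The backward direction has a genuine gap: the claim that the Krull dimension bound forces your irreducible $C\subsetneq X$ to have no proper nonempty $\mathcal{T}$-closed subset. The chain $E\subsetneq C\subsetneq X$ only violates ``dimension $<2$'' if all three sets are irreducible, and you have not shown (and cannot show) that $X$ is irreducible. If $X$ is reducible, $C$ may be a maximal irreducible set --- an irreducible component of $X$ --- and still contain many proper nonempty closed subsets; for a concrete instance, take $X=\power(\gw)\times 2$ with closed sets generated by the finite sets and the two halves $H_0,H_1$: this has Krull dimension $1$, and with $x=(a,0)$ for $a\in V[G_1]\setminus V$ the component of any $V[G_0]$-coded closed set containing $x$ is $H_0$, which is not minimal and is not the $\mathcal{T}$-closure of $\{x\}$ (that closure is the singleton $\{x\}$). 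In that situation your crux step collapses: the $\mathbf{\Pi}^1_2$ absoluteness argument computes the code of $\overline{\{x\}}^{\mathcal{T}}$, which lies in $V[G_1]$ but is a different set from $C$, so you never get $e_C\in V[G_1]$, and the hypothesis on $\power(\gw)\cap V[G_0]\cap V[G_1]$ cannot be applied to $e_C$. (The conclusion is still true in this case, but only because such a $C$ is automatically coded in $V$, being a component of the $V$-coded space --- a fact your argument never invokes.)

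The paper avoids this by a dichotomy rather than a minimality claim: it introduces $D\supseteq D_0\supseteq D_1$, the smallest $\mathcal{T}$-closed sets containing $x$ that are coded in $V$, $V[G_0]$, $V[G_1]$ respectively, notes all three are irreducible, and uses dimension $\leq 1$ to conclude that either $D_0=D$ (so $D_0$ is coded in $V$ and Shoenfield gives a ground-model point of $D_0\cap O\subseteq C\cap O$) or $D_0=D_1$ (so the code of $D_0$ lies in both extensions and hence in $V$ by the hypothesis, and one concludes as before). Your argument is essentially the second horn only. It can be repaired by adding the missing case explicitly: either $C$ equals the irreducible component of $X$ containing $x$ (which is coded in $V$, since components of a $V$-coded set are $V$-coded), or $C$ is properly below that component, in which case a three-term chain of irreducibles does arise and your minimality claim, and the rest of your argument, goes through. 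As written, though, the proof is incomplete.
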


\begin{proof}
For the left-to-right inclusion, suppose that $\dim(V[G_0]/V[G_1])\geq 2$ holds. Consider the topology $\mathcal{T}$ on $\power(\gw)$ whose closed sets are the whole space, the empty set, and the finite sets. It is an analytic Noetherian topology of Krull dimension $1$. Suppose that $x\in\power(\gw)$ belongs to $V[G_0]\cap V[G_1]$. Applying the dimension assumption to the set $C=\{x\}$, the conclusion is that $x\in V$ as desired. 

For the right-to-left inclusion, assume that $\power(\gw)\cap V[G_0]\cap V[G_1]=\power(\gw)\cap V$ holds. Let $X$ be a $K_\gs$ Polish space with a fixed countable basis, let $O\subset X$ be an open set, and let $\mathcal{T}$ be an analytic Noetherian topology of Krull dimension $1$, all coded in $V$. Let $C\in\mathcal{T}$ be a closed set coded in $V[G_0]$ and $x\in V[G_1]$ be a point in $C\cap O$. Consider the sets $D, D_0, D_1\in\mathcal{T}$ which are the smallest $\mathcal{T}$-closed sets containing $x$ and coded in $V, V[G_0], V[G_1]$ respectively. These sets do exist and are indecomposable as the topology $\mathcal{T}$ is Noetherian. Clearly, $D_0\subseteq D$ holds. In addition, a Shoenfield absoluteness argument shows that $D_1$ is the smallest $\mathcal{T}$-closed set containing $x$ in any model, so $D_1\subseteq D_0$ holds. 

The Krull dimension assumption on the topology $\mathcal{T}$ now implies that either $D_0=D$ or $D_1=D_0$ must hold. If $D_0=D$, then $D_0$ is coded in the ground model. By a Shoenfield absoluteness argument $D_0\cap O$ must be nonempty in $V$ since it is nonempty in $V[G_1]$. Any ground model point of $D_0\cap O$ verifies the $\dim(V[G_0]/V[G_1])\geq 2$ assertion. If, on the other hand, $D_0=D_1$ holds, then the set of all basic open sets disjoint from $D_0$ belongs to both $V[G_0]$ and $V[G_1]$, and therefore to $V$ by the initial assumption. Thus, $D_0$ is coded in $V$, and the proof is concluded as in the previous case.
\end{proof}

\noindent The next order of business is to show that certain common operations on generic extensions preserve Krull dimension.

\begin{proposition}
\label{productproposition}
Let $V[G_0]$ and $V[G_1]$ be generic extensions and $n\geq 1$ be a number. Suppose that $\dim(V[G_0]/V[G_1])\geq n$ holds.

\begin{enumerate}
\item If $V[K_0]\subset V[G_0]$ and $V[K_1]\subset V[G_1]$ then $\dim(V[K_0]/V[K_1])\geq n$;
\item if $P_0\in V[G_0]$ and $P_1\in V[G_1]$ are posets and $H_0\subset P_0$ and $H_1\subset P_1$ are filters mutually generic over $V[G_1, G_0]$, then $\dim(V[G_0][H_0]/V[G_1][H_1])\geq n$.
\end{enumerate}
\end{proposition}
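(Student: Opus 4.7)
Part~(1) is immediate from the definitions. Any $\mathcal{T}$-closed set $C$ coded in $V[K_0]$ is automatically coded in $V[G_0]$, and any point $y \in V[K_1]$ lies in $V[G_1]$. Thus if $C \cap O \cap V[K_1] \neq \emptyset$ then a fortiori $C \cap O \cap V[G_1] \neq \emptyset$, and the hypothesis $\dim(V[G_0]/V[G_1]) \geq n$ yields $C \cap O \cap V \neq \emptyset$.

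Part~(2) is the substance, and my plan turns on mutual genericity. First I would exploit the product forcing lemma applied to $H_0$ viewed as a $P_0$-generic filter over $V[G_0, G_1]$ (with $P_0 \in V[G_0]$) and symmetrically to $H_1$, obtaining the intersection identities $V[G_0][H_0] \cap V[G_0, G_1] = V[G_0]$ and $V[G_1][H_1] \cap V[G_0, G_1] = V[G_1]$. Composing these with $V[G_0, G_1][H_0] \cap V[G_0, G_1][H_1] = V[G_0, G_1]$, which is the mutual genericity of $H_0, H_1$ over $V[G_0, G_1]$, yields the crucial inclusion $V[G_0][H_0] \cap V[G_1][H_1] \subseteq V[G_0] \cap V[G_1]$.

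Next, fix an analytic Noetherian topology $\mathcal{T}$ on a $K_\gs$-Polish space $X$ of Krull dimension less than $n$, a basic open set $O$, a $\mathcal{T}$-closed $C \in V[G_0][H_0]$, and a point $x \in V[G_1][H_1] \cap C \cap O$. By Shoenfield absoluteness as in Proposition~\ref{twoproposition}, the smallest $\mathcal{T}$-closed set $F^x$ containing $x$ is absolute, irreducible, contained in $C$, and lies in $V[G_1][H_1]$. It would suffice to show $F^x$ is coded in $V$: Shoenfield then gives a $V$-point in $F^x \cap O$, which is automatically in $C \cap O \cap V$. To achieve this, I would argue that $F^x$ is also coded in $V[G_0][H_0]$; combined with $F^x \in V[G_1][H_1]$ and the intersection inclusion above, the Effros-Borel code of $F^x$ lies in $V[G_0] \cap V[G_1]$, and Proposition~\ref{twoproposition} (invoked via $\dim(V[G_0]/V[G_1]) \geq 2$, which follows from $n \geq 2$) places this code in $V$.

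The main obstacle will be showing $F^x \in V[G_0][H_0]$. By Noetherianity there is a smallest $V[G_0][H_0]$-coded $\mathcal{T}$-closed set $E$ containing $x$, obtained as a finite intersection, with $F^x \subseteq E$, and the aim is $F^x = E$. A failure, $F^x \subsetneq E$, opens a chain of irreducible $\mathcal{T}$-closed sets; I would extend it by interleaving smallest-$x$-containing sets coded in the intermediate models $V[G_0, G_1][H_0]$, $V[G_0, G_1]$, $V[G_0]$, $V[G_1]$, and $V$, invoking $\dim(V[G_0]/V[G_1]) \geq n$ applied to appropriate irreducible components in $V[G_0]$ identified by a Noetherian analysis of the $P_0$-name $\dot{C}$ and the $P_1$-name $\dot{x}$, in order to produce a strictly descending chain of length $n+1$ contradicting Krull dimension less than $n$. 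Arranging this chain construction to succeed uniformly in $n$, rather than for small $n$ only, is the delicate technical point.
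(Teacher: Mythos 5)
Part (1) is fine and matches the paper (which dismisses it as obvious). Part (2) contains a genuine gap, and it is exactly the step you flag as the ``main obstacle.'' Your reduction asks for two things: that $F^x$, the $\mathcal{T}$-closure of $\{x\}$, coincide with the smallest $V[G_0][H_0]$-coded closed set $E\supseteq\{x\}$, and that this set be coded in $V$. The first of these is simply false in general: take $\mathcal{T}$ to be the cofinite topology on a line (Krull dimension $1$, so admissible for $n=2$) and $x\in V[G_1][H_1]\setminus V[G_0][H_0]$; then $F^x=\{x\}$ while $E$ is the whole space, and the strict inclusion $F^x\subsetneq E$ produces no contradiction with the Krull dimension, contrary to what your sketch needs. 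More structurally, your plan to refute $F^x\subsetneq E$ by interleaving smallest-$x$-containing sets coded in the handful of intermediate models $V[G_0,G_1][H_0]$, $V[G_0,G_1]$, $V[G_0]$, $V[G_1]$, $V$ cannot yield a strictly decreasing chain of irreducibles of length $n+1$ uniformly in $n$: you have a fixed finite stock of models, so the chain length cannot grow with $n$. (There is also a secondary issue: your intersection identity $V[G_0][H_0]\cap V[G_0,G_1]=V[G_0]$ needs $V[G_0,G_1]$ to be a generic extension of $V[G_0]$, which is not part of the hypotheses.)

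The paper's proof of (2) takes a different and much more direct route that you may want to internalize: it never tries to locate the closure of $\{x\}$ in any model. Instead, fixing conditions $\langle p_1,p_0\rangle$ in the product forcing $\tau_0\in\tau_1$, it passes to a countable elementary submodel $M_1$ of $V[G_1]$ and forms $C=\bigcap\{\tau_1/F\colon F$ an $M_1$-generic filter through $p_1\}$, which by Noetherianity is a legitimate $\mathcal{T}$-closed set coded in $V[G_1]$ itself; a symmetric local realization $x_0=\tau_0/F_0$ in $V[G_0]$ is shown to lie in $C$ by the forcing theorem. The hypothesis $\dim(V[G_0]/V[G_1])\geq n$ then applies verbatim to the pair $(C,x_0)$ and yields a ground-model point, which is pushed back into every interpretation $\tau_1/F$ by genericity over $M_1$. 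This ``intersect the name over all locally generic filters'' device is what replaces the chain construction you were attempting, and it is the idea missing from your proposal.
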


\noindent In particular, a mutually generic pair of extensions of $V$ has Krull dimension $\infty$.

\begin{proof}
The first item is obvious. For the second item, let $X$ be a $K_\gs$-Polish space and $\mathcal{T}$ an analytic Noetherian topology on it of Krull dimension smaller than $n$. Suppose that $\tau_1\in V[G_1]$ is a $P_1$-name for a $\mathcal{T}$-closed subset of $X$ and $\tau_0\in V[G_0]$ is a $P_0$-name for an element of $X$, and $p_1\in P_1$ and $p_0\in P_0$ are conditions such that the pair $\langle p_1, p_0\rangle$ forces in the product $P_1\times P_0$ that $\tau_0\in\tau_1$ holds. I will find a point $x\in X\cap V$ such that $p_1\Vdash\check x\in \tau_1$, proving the proposition.

Working in $V[G_1]$, let $M_1$ be a countable elementary submodel of a large structure, and let $\mathcal{F}$ be the set of all filters on $P_1\cap M_1$ which are generic over the model $M_1$ and contain the condition $p_1$. Let $C=\bigcap\{\tau_1/F\colon F\in\mathcal{F}\}$; this is a $\mathcal{T}$-closed set in the model $V[G_1]$. Working in $V[G_0]$, let $M_0$ be a countable elementary submodel of a large structure, let $F_0\subset P_0$ be a filter generic over $M_0$ containing the condition $p_0$, and let $x_0=\tau_0/F_0$; this is a point in the model $V[G_0]$. Observe that $x_0\in C$ must hold. Otherwise, there would be a filter $F_1\in\mathcal{F}$ such that $x_0=\tau_0/F_0\notin\tau_1/F_1$ and a basic open set $O\subset X$ in its usual Polish topology which contains $\tau_0/F_0$ and is disjoint from $\tau_1/F_1$. By the forcing theorem applied with $P_1$ and $P_0$, there would have to be conditions $p'_1\leq p_1$ and $p'_0\leq p_0$ forcing respectively that $O\cap\tau_1=0$ and $\tau_0\in O$. Such conditions contradict the initial assumptions on $p_1$ and $p_0$.

Now, since $\dim(V[G_0]/V[G_1])\geq n$, there must be a point $x\in X\cap V$ such that $x\in C$ holds. I claim that $p_1\Vdash\check x\in\tau_1$ as desired. If this failed, there would have to be a basic open set $O\subset X$ containing the point $x$ and a condition $p'_1\leq p_1$ forcing $O\cap\tau_1=0$. By the elementarity of the model $M_1$, such a condition $p'_1$ can be found in the model $M_1$. Let $F\in\mathcal{F}$ be a filter containing the condition $p'_1$. Observe that $x\notin\tau_1/F$, contradicting the choice of the point $x$.
\end{proof}

\noindent The Krull dimension of generic extensions will be used in this paper only to topologies associated with algebraic subsets of Euclidean spaces. The following proposition provides their instrumental property reminiscent of mutual genericity.

\begin{proposition}
\label{iproposition}
Suppose that $n\geq 1$ is a number and $V[G_0], V[G_1]$ are generic extensions such that $\dim(V[G_0]/V[G_1])\geq n+1$. Suppose that $f\colon \mathbb{R}^n\to\mathbb{R}$ is a polynomial function and $C\subset\mathbb{R}^n$ is an algebraic set distinct from $\mathbb{R}^n$, both coded in $V[G_0]$. Suppose $x\in C$ is a point in $V[G_1]$ such that $f(x)\in V[G_0]$.
Then there is an algebraic set $D\subseteq C$ and a polynomial function $g\colon\mathbb{R}^n\to\mathbb{R}$, both coded in the ground model $V$, such that $x\in D$ and $f\restriction D=g\restriction D$.
\end{proposition}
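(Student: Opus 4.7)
The plan is to apply the dimension hypothesis to the analytic Noetherian topology $\mathcal{T}$ from Example~\ref{example2} on $\mathbb{R}^n\times\mathbb{R}$, which has Krull dimension $n$ and is therefore admissible because $\dim(V[G_0]/V[G_1])\geq n+1$.

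Setting $y=f(x)\in V[G_0]$, the pair $(x,y)$ lies in the $V[G_0]$-coded $\mathcal{T}$-closed set $T=f\restriction C$, which is not the whole space since $C\neq\mathbb{R}^n$. I would take $T_0$ to be the smallest $\mathcal{T}$-closed set coded in $V[G_0]$ containing $(x,y)$; by Noetherianness $T_0$ exists in $V[G_0]$ and is irreducible, and since $T_0\subseteq T\neq\mathbb{R}^n\times\mathbb{R}$, the description from Example~\ref{example2} gives $T_0=f_0\restriction A_0$ for some $V[G_0]$-polynomial $f_0$ and some irreducible $V[G_0]$-algebraic $A_0\subsetneq\mathbb{R}^n$ with $x\in A_0\subseteq C$ and $f_0\restriction A_0=f\restriction A_0$. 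In parallel, let $U$ denote the smallest $V$-coded $\mathcal{T}$-closed set containing $(x,y)$, which again exists by Noetherianness and satisfies $T_0\subseteq U$.

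The core task is to prove $T_0=U$, so that $T_0$ is $V$-coded and can be written $g\restriction D$ with $g$ a $V$-polynomial and $D$ a $V$-algebraic irreducible proper subset of $\mathbb{R}^n$; this immediately yields the desired $D=A_0\subseteq C$ with $x\in D$ and $f\restriction D=g\restriction D$. For the reverse inclusion $U\subseteq T_0$, I would apply the dimension hypothesis to the $V[G_0]$-coded set $T_0$ with the $V[G_1]$-point $(x,y)$; this requires the preliminary observation that $y\in V$, which follows from Proposition~\ref{twoproposition} once $y\in V[G_1]$ is established. Applying the dimension hypothesis yields $V$-points of $T_0$ densely near $(x,y)$, and the $V$-Zariski closure of these points is a $V$-coded $\mathcal{T}$-closed subset of $T_0$ containing $(x,y)$, forcing $U\subseteq T_0$ by minimality of $U$.

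The main obstacle will be establishing $y\in V$: the hypothesis gives $y\in V[G_0]$ but not directly $y\in V[G_1]$, so Proposition~\ref{twoproposition} does not apply out of the box. Bridging this gap --- likely via a Shoenfield-style absoluteness argument leveraging the Noetherian structure of $\mathcal{T}$ and the specific form of $T_0=f_0\restriction A_0$, together with the observation that any $V$-polynomial $g$ agreeing with $f_0$ on $A_0$ would automatically give $g(x)\in V[G_1]$ and hence force $y=g(x)\in V[G_1]\cap V[G_0]=V$ --- is the most technical step of the proof.
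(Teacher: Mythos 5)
Your opening move coincides with the paper's: both proofs apply the dimension hypothesis to the Noetherian topology of Example~\ref{example2} on $\mathbb{R}^n\times\mathbb{R}$ to produce ground-model points of $f\restriction C$ accumulating (in the Polish topology) at $\langle x, f(x)\rangle$. The divergence, and the gap, is in what you do with those points. Your argument rests on the assertion that ``the $V$-Zariski closure of these points is a $V$-coded $\mathcal{T}$-closed subset of $T_0$.'' That single sentence is the entire mathematical content of the proposition and it is not justified. If by ``$V$-Zariski closure'' you mean the intersection of all $V$-coded $\mathcal{T}$-closed supersets of the points, then it is automatically $V$-coded (Noetherianity gives a finite subintersection) but there is no reason it should be contained in $T_0$, since $T_0$ is only $V[G_0]$-coded --- containment would already amount to the conclusion you are trying to prove. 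If instead you mean the genuine $\mathcal{T}$-closure, then it is contained in $T_0$ but you must show it is coded in $V$; this does not follow from Noetherianity, which is a descending chain condition, whereas the closure of a countable set of $V$-points is a closure of an increasing union of finite sets. The paper closes exactly this gap by a concrete computation: it writes $\langle f(z),e(z)\rangle=h(t,z)$ with $h$ an integer polynomial linear in the coefficient vector $t\in\mathbb{R}^m$, forms the affine sets $F_i=\{u\colon h(u,x_i)=\langle y_i,0\rangle\}$ (which are $V$-coded since $x_i,y_i\in V$), uses Noetherianity of $\mathbb{R}^m$ on the \emph{decreasing} chain of partial intersections to get $F=\bigcap_iF_i$ coded in $V$, and then reads off $D$ and $g$ from $F$. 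Some such argument --- equivalently, a proof that the Zariski closure of a set of points with coordinates in $V$ is defined over $V$ --- must be supplied; without it your proof does not go through.

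Two further remarks. First, you correctly observe that invoking the dimension hypothesis requires a point of the closed set lying in $V[G_1]$, i.e.\ one needs $\langle x, f(x)\rangle\in V[G_1]$, which the stated hypothesis $f(x)\in V[G_0]$ does not literally provide; the paper's own proof passes over this silently, and in every application in the paper one in fact has $f(x)\in V[G_1]$. However, your proposed bridge is circular: you derive $g(x)\in V[G_1]$ from the existence of the ground-model $g$, which is the conclusion of the proposition. Second, your route targets the stronger intermediate claim $T_0=U$ (that the minimal $V[G_0]$-coded closed set at $\langle x,f(x)\rangle$ is already $V$-coded); the paper never needs this and is content to produce some $V$-coded pair $(D,g)$ with $x\in D\subseteq C$ and $f\restriction D=g\restriction D$, where $D$ need not be the minimal such set.
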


\begin{proof}
Consider the analytic Noetherian topology $\mathcal{T}$ on $\mathbb{R}^n\times\mathbb{R}$ generated by all restrictions of a polynomial functions from $\mathbb{R}^n$ to $\mathbb{R}$ to an algebraic subset of $\mathbb{R}^n$ distinct from $\mathbb{R}^n$. As in Example~\ref{example2}, the Krull dimension of $\mathcal{T}$ is clearly equal to $n$. Note that $f\restriction C$ is a $\mathcal{T}$-closed set in $V[G_0]$ containing the pair $\langle x, f(x)\rangle$. By an application of the dimension assumption, there is a sequence of pairs $\langle x_i, y_i\colon i\in\gw\rangle$ converging to $\langle x, f(x)\rangle$ such that for each $i\in\gw$, the points $x_i\in C$ and $y_i$ are both in $V$ and $f(x_i)=y_i$.

Let $e$ be a polynomial function on $\mathbb{R}^n$ such that $C=\{z\in\mathbb{R}^n\colon e(z)=0\}$. Now, the polynomial functions $f, e$ are linear functions of the coefficients used in the polynomials defining $f, e$. That is, for some $k\in\gw$ there is a polynomial function $h\colon \mathbb{R}^m\times\mathbb{R}^n\to\mathbb{R}\times\mathbb{R}$ with integer coefficients which is linear in the variables from $\mathbb{R}^m$ and such that for some $t\in\mathbb{R}^m\cap V[G_0]$, for all $z\in\mathbb{R}^n$ $\langle f(z), e(z)\rangle=h(t, z)$. For each $i\in\gw$ let $F_i=\{u\in\mathbb{R}^m\colon h(u, x_i)=\langle y_i, 0\rangle\}$; this is an algebraic and even affine subset of $\mathbb{R}^m$. Since the topology of algebraic subsets of $\mathbb{R}^m$ is Noetherian, the intersection $F=\bigcap_i F_i$ belongs to the ground model, since for some finite number $j\in\gw$, $\bigcap_{i\in\gw}F_i=\bigcap_{i\in j}F_i$ holds. Note that the set $F$ is nonempty, because in $V[G_0]$ it contains $t$. Let $D\subset\mathbb{R}^n$ be the algebraic set of all points $z\in\mathbb{R}^n$ such that the value of $h(u, z)$ is the same for all $u\in F$ and has zero as its second coordinate. Note that $D$ is in the ground model,  $x_i\in D$ for all $i\in D$ and also $x\in D$, since all polynomial functions are continuous.
Choose a point $u\in F$ in the ground model, and let $g$ be the polynomial function defined by $g(z)=$ the first coordinate of $h(u, z)$. Clearly, the set $D$ and the function $g$ work as required.
\end{proof}

\section{Examples}

In order to use the concept of Krull dimension efficiently, it is necessary to build examples of pairs of generic extensions with interesting Krull dimension characteristics. In this section, I show how to produce such examples from algebraic subsets of Euclidean spaces. The following abstract observation will be useful at various points in the proofs.

\begin{theorem}
\label{governtheorem}
Let $X_0, X_1$ be Euclidean spaces and $A\subset X_0\times X_1$ be an algebraic set, and write $\pi\colon X_0\times X_1\to X_0$ for the projection function. For every nonempty relatively open set $O_0\subset A$ there is a nonempty relatively open set $O_1\subset O_0$ and  an algebraic set $B\subset X_0$ such that $\pi\restriction O_1$ is an open map to $B$. 
\end{theorem}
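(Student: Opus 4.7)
The plan is to reduce to the case of an irreducible $A$, pass to a Zariski-open dense smooth subset, and apply the implicit function theorem at a point of maximal Jacobian rank. First, decompose $A=A_1\cup\cdots\cup A_k$ into irreducible components; since $O_0$ is nonempty and relatively open in $A$, at least one set of the form $O_0\cap A_i\setminus\bigcup_{j\neq i}A_j$ is nonempty and relatively open in $A_i$. Replacing $A$ by this $A_i$ and $O_0$ by this smaller piece, I may assume $A$ is irreducible. Since $A$ is an irreducible real algebraic set, there exists a Zariski-open dense subset $A^\circ\subseteq A$ that is a real-analytic submanifold of $X_0\times X_1$ of dimension $\dim A$; shrinking $O_0$ to the nonempty relatively open set $O_0\cap A^\circ$, I may further assume $O_0\subseteq A^\circ$.

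Let $B\subseteq X_0$ be the Zariski closure of $\pi(A)$. Since $A$ is irreducible, so is $B$, and $\pi\colon A\to B$ is Zariski-dominant. By semi-algebraic dimension theory, $\dim\pi(A)=\dim B$, and consequently the maximal value of the rank of $d\pi_a\colon T_aA^\circ\to X_0$, taken over $a\in A^\circ$, equals $\dim B$. The set $U\subseteq A^\circ$ where this maximum is attained is Zariski-open, nonempty, and therefore Zariski-dense in the irreducible $A$. Fix likewise a Zariski-open dense subset $B^\circ\subseteq B$ that is a real-analytic submanifold of $X_0$ of dimension $\dim B$, and set $U'=U\cap\pi^{-1}(B^\circ)$. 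Then $U'$ is Zariski-open and Zariski-dense in $A$, hence Euclidean-dense, so $O_0\cap U'$ is nonempty.

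Pick any $a\in O_0\cap U'$. The differential $d\pi_a\colon T_aA^\circ\to T_{\pi(a)}B^\circ$ has rank $\dim B$ and is therefore surjective, so $\pi$ is a smooth submersion from some Euclidean neighborhood $O_1\subseteq O_0\cap U'$ of $a$ in $A^\circ$ onto a Euclidean neighborhood of $\pi(a)$ in $B^\circ$. By the implicit function theorem, submersions between smooth manifolds are open maps, and since $B^\circ$ is Euclidean-open in $B$, the restriction $\pi\restriction O_1$ is open as a map into $B$. The main obstacle is the identification of the generic rank of $d\pi$ on $A^\circ$ with $\dim B$ rather than some smaller value: this is where the algebraic nature of $\pi$ is essential, and it can be handled either directly via Jacobian criteria for dimension in semi-algebraic geometry, or via Tarski--Seidenberg quantifier elimination, which ensures $\pi(A)$ is semi-algebraic of dimension equal to $\dim B$.
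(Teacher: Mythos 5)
There is a genuine gap, and it sits exactly at the point where real algebraic geometry diverges from the complex theory: you repeatedly intersect the given Euclidean-relatively-open set $O_0$ with nonempty Zariski-open subsets of $A$ ($A^\circ$, then $U$, then $U'$) and assert the result is nonempty because a nonempty Zariski-open subset of an irreducible set is Zariski-dense, ``hence Euclidean-dense.'' Over $\mathbb{R}$ this inference is false. Take the Whitney umbrella $A=\{(x,y,z)\colon x^2=y^2z\}\subset\mathbb{R}^3$: it is irreducible of dimension $2$, but it contains the whole $z$-axis, and near a point such as $(0,0,-1)$ the set $A$ coincides with that axis. A small relatively open $O_0$ around this point is disjoint from the two-dimensional smooth locus $A^\circ$, and indeed disjoint from the Zariski-open dense set $A\setminus\{x=y=0\}$. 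So $O_0\cap A^\circ$ can be empty and your argument never gets off the ground. The same phenomenon undermines the opening reduction: a relatively open piece of $O_0$ lying in a component $A_i$ may be entirely contained in $\bigcup_{j\neq i}A_j$ (a proper algebraic subset of an irreducible real algebraic set can contain a nonempty Euclidean-open piece of it), so the set $O_0\cap A_i\setminus\bigcup_{j\neq i}A_j$ need not be nonempty for any $i$. The differential-geometric core of your argument --- generic rank of $d\pi$ on the smooth locus equals $\dim B$, constant rank at a maximal-rank point, submersion onto $B^\circ$ which is open in $B$ --- is sound modulo standard semialgebraic facts; it is the density bookkeeping that breaks.

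The repair is a minimality device rather than irreducible decomposition, and it is the engine of the paper's proof: choose the algebraic set $B$ inclusion-minimal (via the Hilbert basis theorem) among those containing $\pi''O'$ for some nonempty basic relatively open $O'\subset O_0$. Minimality yields the dichotomy that every algebraic $D$ either contains $B$ or has $\pi^{-1}D$ nowhere dense in $O'$ --- this is the correct substitute for ``proper Zariski-closed sets are Euclidean-nowhere dense,'' valid only because $B$ was chosen minimal relative to the given open set. The paper then avoids the smooth locus altogether: it observes that the locus where $\pi$ is open to $B$ is semialgebraic, applies quantifier elimination to split into the two cases ``open on a relatively open set'' versus ``nowhere open,'' and kills the second case by a Baire category argument using the dichotomy. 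If you want to keep your submersion argument, you would need to run the same minimality trick one level down (a minimal irreducible algebraic set containing a relatively open piece of $O_0$) to guarantee that the singular locus and the low-rank locus really are nowhere dense in the piece of $O_0$ you work with; as written, the proof does not establish that.
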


\begin{proof}
Fix the relatively open set $O_0\subset A$. To find the algebraic set $B$, consider the collection $\mathcal{C}$ of all algebraic sets $C\subset X_0$ such that there is a nonempty basic relatively open set $O'\subset O_0$ such that $\pi''O'\subset C$. Note that $\mathcal{C}$ is nonempty, containing the whole space $X_0$ in particular. By the Hilbert basis theorem, there is an inclusion-minimal element $B\in\mathcal{C}$, with its membership witnessed by some basic relatively open set $O'\subset O_0$. Below, we need the following key feature of the set $B$.

\begin{claim}
For every algebraic set $D\subset X_1$, either $B\subseteq D$ holds or $\pi^{-1}D$ is nowhere dense in $O'$. 
\end{claim}

\begin{proof}
Observe that if $\pi^{-1}D$ is somewhere dense in $O'$, then since $\pi^{-1}D$ is a closed set, it has to contain a nonempty relatively open subset $O''\subseteq O'$. By the minimal choice of $B$, it must be the case that $D\cap B=B$ or in other words $B\subseteq D$.
\end{proof}

\noindent To find the set $O_1$, let $C$ be the set of those $x\in O'$ such that $f\restriction O'$ is an open map to $B$ at $x$. In other words, $C=\{x\in O'\colon$ for every $U\subset O'$ basic open with $x\in U$ there is $V\subset X_0$ basic open containing $\pi(x)$ such that $V\cap B\subset \pi'' U\}$. Note that $C\subset X_0\times X_1$ is a semialgebraic set. By the quantifier elimination theorem for real closed fields \cite[Theorem 3.3.15]{marker:book}, $C$ is a finite Boolean combination of open and closed sets, and therefore there is a nonempty relatively open set $O_1\subset O'$ such that either $O_1\subset C$ or $O_1\cap C=0$. The former option gives us the conclusion of the theorem. It is therefore enough to derive a contradiction from the latter option.

Assume towards a contradiction that $O_1\cap C=0$. Let $\mathcal{B}$ be a countable basis of semialgebraic relatively open subsets of $O_1$. For each $U\in\mathcal{B}$, consider the set $\pi''U\subset B$. This is a semialgebraic set; by the quantifier elimination theorem, it is a Boolean combination of some sets of the form $\{y\in X_0\colon p(y)=0\}$ and $\{y\in X_0\colon p(y)>0\}$ for some polynomials $p$ with real coefficients. Let $p_{Ui}$ for $i\in i_U$ be a list of these polynomials. For each $i\in i_U$, the algebraic set $D_{iU}=\{y\in X_0\colon p_{iU}(y)=0\}$ is either a superset of $B$, or else its $\pi$-preimage is nowhere dense in $O'$ by the claim. Use the Baire category theorem to find a point $x\in O_1$ which belongs to no set  $\pi^{-1}D_{iU}$ where $U\in\mathcal{B}$, $i\in i_{U}$, and $B\not\subseteq D_{iU}$.

Since the point $x$ does not belong to the set $C$, the map $\pi\restriction O'$ is not open to $B$ at $x$. Thus, there must be a basic relatively open set $U\in\mathcal{B}$ such that $x\in U$ and $\pi''U$ contains no open neighborhood of $f(x)$. By the choice of the point $x$, the point $f(x)$ belongs to none of the closed sets $D_{iU}\subset X_0$ except to those for which $B\subseteq D_{iU}$.  Let $V\subset X_0$ be an open neighborhood of $f(x)$ such that for every $i\in i_U$, if $B\not\subseteq D_{iU}$ then $V\cap D_{iU}=0$, and if $p_{iU}(f(x))\neq 0$ then $p_{iU}$ does not change sign in $V$.
The choice of the neighborhood $V$ shows that within the set $V\cap B$, the membership in the set $\pi''U$ does not change. Since $f(x)\in V\cap B$, it must be that $V\cap B\subseteq \pi''U$ holds. This contradicts the choice of the open set $U\subset A$.
\end{proof}

\noindent All generic extensions discussed in this section are Cohen generic with the following parlance. If $X$ is a Polish space then $P_X$ denotes the \emph{Cohen forcing} associated with $X$, the set of nonempty open subsets of $X$ ordered by inclusion, adding a single generic point $\dotxgen$. If $Y$ is another Polish space and $f\colon X\to Y$ is a continuous open map, then $P_X\Vdash\dot f(\dotxgen)$ is a Cohen generic point over $V$ for the poset $P_Y$ by \cite[Proposition 3.1.1]{z:geometric}. An important point is that if $n\geq 1$ is a number, then the posets $P_{X^n}$ and $(P_X)^n$ are co-dense, therefore a $P_{X^n}$-generic $n$-tuple consists of mutually $P_X$-generic points.

\begin{definition}
Suppose that $X, Y$ are Polish spaces, $f\colon X\to Y$ is a continuous function, and $n\geq 1$ is a number. $f^n\colon X^n\to Y^n$ denotes the continuous function defined by $f^n(\vec x)(i)=f(\vec x(i))$. $X^n/f$ is the closed subset of $X^n$ of all points $\vec x\in X^n$ such that the values $f(\vec x(i))$ are the same for all $i\in n$. In particular, $X^1/f=X$.
\end{definition}

\begin{proposition}
\label{triviproposition}
Let $X, Y$ be Polish spaces, $f\colon X\to Y$ be a continuous open function, and $a\subset b$ be nonempty finite sets. The projection function $\pi\colon X^b/f\to X^a/f$ is open.
\end{proposition}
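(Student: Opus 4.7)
The plan is to verify openness on a basis for the topology of $X^b/f$ and compute the $\pi$-image of a basic relatively open set explicitly. Any basic relatively open set in $X^b/f$ has the form $V=\bigl(\prod_{i\in b}U_i\bigr)\cap (X^b/f)$ for open sets $U_i\subset X$, and since image commutes with unions it suffices to show $\pi''V$ is open in $X^a/f$ for each such $V$.

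I would then unfold the image. A tuple $\vec y\in X^a/f$ lies in $\pi''V$ precisely when $\vec y(j)\in U_j$ for every $j\in a$ and, for each $i\in b\setminus a$, there exists $z_i\in U_i$ whose $f$-value equals the common value $y^\ast:=f(\vec y(j))$ (well-defined since $\vec y\in X^a/f$). The second requirement is just $y^\ast\in\bigcap_{i\in b\setminus a}f''U_i$. This is where openness of $f$ enters: each $f''U_i$ is open in $Y$, so $W:=\bigcap_{i\in b\setminus a}f''U_i$ is open. Combining the two requirements yields
\[
\pi''V=\Bigl(\prod_{j\in a}(U_j\cap f^{-1}W)\Bigr)\cap (X^a/f),
\]
which is manifestly relatively open in $X^a/f$. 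The edge case $b\setminus a=\emptyset$ is vacuous: the empty intersection gives $W=Y$, and $\pi$ is the identity map.

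The argument is essentially formal, and I do not anticipate any real obstacle. The sole substantive ingredient is openness of $f$, which converts the existence condition on the dropped coordinates into an open condition on the common $f$-value $y^\ast$; without it $W\subset Y$ could fail to be open and the conclusion would break down.
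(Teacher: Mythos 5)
Your proof is correct and takes essentially the same route as the paper's: both compute the image of a basic open box explicitly as $\bigl(\prod_{j\in a}(U_j\cap f^{-1}W)\bigr)\cap (X^a/f)$, using openness of $f$ to see that $W$ is open. (The paper intersects $f''U_i$ over all $i\in b$ rather than only $i\in b\setminus a$, but since $f(\vec y(j))\in f''U_j$ is automatic for $j\in a$, the two formulas agree.)
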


\begin{proof}
Suppose that $O_i\subset X$ are open sets for $i\in b$, and for each $j\in a$ let $O'_j=O_j\cap \bigcap_{i\in b}f^{-1}f''O_i$. The sets $O'_j\subset O_j$ are open since the function $f$ is open. It is not difficult to see that $\pi''(\prod_{i\in b}O_i\cap X^b/f)=\prod_{j\in a}O'_j\cap X^a/f$.
\end{proof}

\noindent Now I am ready to state and prove a theorem which connects dimension of algebraic sets with Krull dimension of pairs of generic extensions. For algebraic and semi-algebraic subsets of Euclidean spaces I use the tame topological notion of dimension as developed in \cite{vandendries:tame}.

\begin{theorem}
\label{dimtheorem}
Let $X$ be an algebraic subset of a Euclidean space, and $f_0\colon X\to Y_0$ and $f_1\colon X\to Y_1$ be projections of $X$ to certain sets of coordinates. Let $n\geq 1$ be a number. If for every nonempty relatively basic  open set $U\subset X^n/f_1$, the set $(f_0^n)''U\subset Y_0^n$ has the maximal possible dimension $n\cdot\dim(Y_0)$, then $P_X\Vdash \dim(V[f_0(\dotxgen)]/V[f_1(\dotxgen)])\geq n$.
\end{theorem}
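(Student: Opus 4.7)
\noindent\emph{Plan.} The case $n=1$ is vacuous since the Krull dimension is always at least $1$, so assume $n\geq 2$ and argue by contradiction. Using Theorem~\ref{governtheorem} together with the dimension hypothesis, I first shrink $X$ to a basic open piece on which both $f_0\colon X\to Y_0$ and $f_1\colon X\to Y_1$ are continuous open surjections; the hypothesis is preserved under passing to open subsets of $X$. Suppose then that some $p\in P_X$, a ground-model analytic Noetherian topology $\mathcal T$ on a $K_\gs$ Polish space $Z$ of Krull dimension less than $n$, a basic open $O\subseteq Z$ in $V$, and $P_X$-names $\tau,\sigma$ witness a failure of the dimension inequality: $p$ forces that $\tau$ is a $\mathcal T$-closed set coded in $V[f_0(\dotxgen)]$, that $\sigma\in V[f_1(\dotxgen)]\cap\tau\cap O$, and yet $\tau\cap O\cap V=\emptyset$. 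By the standard continuous reading of names for Cohen forcing I obtain Borel functions $H$ from $Y_0$ to the Effros Borel space of $Z$ and $h\colon Y_1\to Z$, both in $V$, such that $p\Vdash\tau=H(f_0(\dotxgen))$ and $\sigma=h(f_1(\dotxgen))$.

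The heart of the argument is to produce $n$ coupled copies of this scenario. Force over $V$ with $P_{X^n/f_1}$ below the condition $p^n\cap (X^n/f_1)$ to obtain $\vec x=(x_0,\ldots,x_{n-1})$ with common value $y_1:=f_1(x_i)$. By Proposition~\ref{triviproposition}, each coordinate projection $X^n/f_1\to X$ is open, so each $x_i$ is individually $P_X$-generic over $V$ below $p$. Writing $y_0^i:=f_0(x_i)$, $C_i:=H(y_0^i)$, and $z:=h(y_1)$, the forcing relation applied to each $x_i$ gives that $C_i$ is a $\mathcal T$-closed set in $V[y_0^i]$, $z\in C_i\cap O$, and $C_i\cap O\cap V=\emptyset$. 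The key claim is that the tuple $(y_0^0,\ldots,y_0^{n-1})$ is $P_{Y_0^n}$-generic over $V$, so that by the product forcing theorem the models $V[y_0^i]$ satisfy the intersection hypothesis of Theorem~\ref{neatproposition}. To verify this claim, regard $g:=(f_0)^n\restriction (X^n/f_1)$ as a coordinate projection of the algebraic set $X^n/f_1$ and invoke Theorem~\ref{governtheorem}: on any nonempty basic open $U\subseteq X^n/f_1$ there is a nonempty open $U_1\subseteq U$ and an algebraic $B\subseteq Y_0^n$ with $g\restriction U_1$ open onto $B$. The dimension hypothesis applied to $U_1$ gives $\dim(g''U_1)=n\cdot\dim(Y_0)$, so $\dim(B)=\dim(Y_0^n)$; since proper algebraic subsets of $Y_0^n$ have strictly smaller dimension, $B=Y_0^n$, and $g$ is generically open onto the full space, carrying generics to generics.

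With mutual genericity secured, I apply Theorem~\ref{neatproposition} inside $V[\vec x]$ to the point $z$, the topology $\mathcal T$, and the tuple $\langle V[y_0^i]\colon i\in n\rangle$. It produces an index $i\in n$ such that the smallest $\mathcal T$-closed set $D_i$ containing $z$ and coded in $V[y_0^i]$ is in fact coded in $V$. By minimality $D_i\subseteq C_i$, and $D_i\cap O$ is a Borel set coded in $V$ that is nonempty (containing $z$); Shoenfield absoluteness applied to the $\mathbf\Sigma^1_1$ statement ``$D_i\cap O\neq\emptyset$'' then yields a point $z'\in D_i\cap O\cap V\subseteq C_i\cap O\cap V$, contradicting $C_i\cap O\cap V=\emptyset$. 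The chief technical obstacle is the middle paragraph: converting the algebro-geometric maximal-dimension hypothesis on $g''U$ into literal generic openness onto the whole $Y_0^n$, which Theorem~\ref{governtheorem} delivers once combined with the strict dimension drop for proper algebraic subsets.
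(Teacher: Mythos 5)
Your proposal is correct and follows essentially the same route as the paper's proof: fix a counterexample condition and names, use Theorem~\ref{governtheorem} plus the maximal-dimension hypothesis to see that $f_0^n$ is open from a basic open piece of $X^n/f_1$ onto all of $Y_0^n$ (hence carries the $P_{X^n/f_1}$-generic to a mutually generic tuple), and then apply Theorem~\ref{neatproposition} together with absoluteness to extract a ground-model point of $C_i\cap O$. The only cosmetic differences are your explicit continuous reading of names and the remark that $n=1$ is vacuous; the substance is identical.
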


\begin{proof}
Let $O$ be a nonempty, relatively basic open subset of $X$, forcing the conclusion to fail. Thinning out $O$ if necessary, I may assume that $f_0$ and $f_1$ on $O$ are open maps to some algebraic subsets $A_0\subseteq Y_0$, $A_1\subseteq Y_1$ respectively (this is by Theorem~\ref{governtheorem}), and there is a Polish $K_\gs$-space $Z$, a nonempty open set $W\subset Z$, an analytic Noetherian topology $\mathcal{T}$ of Krull dimension less than $n$, and $P_{A_0}$-name $\tau_0$ for a $\mathcal{T}$-closed subset of $Z$ containing no ground model elements of $W$, and a $P_{A_1}$-name $\tau_1$ for an element of $U$ such that $O\Vdash\tau_1/f_1(\dotxgen)\in\tau_0/f_0(\dotxgen)$.

Consider the nonempty relatively open set $U_0=O^n\cap X^n/f_1$. Observe that $X^n/f_1$ is an algebraic set. By Theorem~\ref{governtheorem}, there is a relatively open nonempty subset $U_1\subset U_0$ on which $f_0^n$ is an open map to an algebraic set $A_2\subset Y_0^n$. Now, the algebraic set $A_2$ has full dimension $n\cdot \dim(Y_0)$ by the assumptions, so $A_2=Y_0^n$ must hold.
Let $\vec x\in U_1$ be a point generic over $V$ for the poset $P_{X^n/f_1}$. By Proposition~\ref{triviproposition}, each of the points $\vec x(i)\in X$ for $i\in n$ is $P_X$-generic over $V$, their $f_1$-images coincide and are equal to a point $y_1\in Y_1$ which is $P_{A_1}$-generic over $V$, and their $f_0$-images $y_{0i}=f_0(\vec x(i))$ are mutually generic elements of $Y_0$. Consider the models $V[y_{0i}]$ for $i\in n$, and in each of them the $\mathcal{T}$-closed set $C_i=\tau_0/y_{0i}$. Consider also the point $z=\tau_1/y_1$. By Theorem~\ref{neatproposition}, there is a number $i\in n$ and a ground model coded $\mathcal{T}$-closed set $D\subset Z$ such that $z\in D\subseteq C_i$. Now, the set $D$ contains a point in the open set $W$, namely $z$; by a Mostowski absoluteness argument, it must contain also some other point $z'\in W$ in the ground model. Then $z'\in C_i$ holds, contradicting the initial assumption about the name $\tau_0$.
\end{proof}

\noindent The dimension demand in the assumptions of Theorem~\ref{dimtheorem} may seem hard to verify; the following notion will prove helpful for the ends of this paper.

\begin{definition}
\label{almostdefinition}
Let $A\subset X_0\times X_1$ be an algebraic subset of the product of two Euclidean spaces. Say that $A$ is \emph{narrow} if

\begin{enumerate}
\item for every nonempty relatively open set $O\subset A$ with nonempty intersection with $A$, $\dim(O)\geq\dim(X_0)$;
\item there is a relatively open dense subset $C\subset A$ with all vertical sections finite.
\end{enumerate}
\end{definition}

\begin{proposition}
\label{dproposition}
Suppose $A\subset X_0\times X_1$ is a narrow algebraic subset of the product $X_0\times X_1$ of two Euclidean spaces. Let $\pi\colon A\to X_0$ be the projection function. For every nonempty relatively open set $O\subset A$, $\dim(\pi'' O)=\dim(X_0)$.
\end{proposition}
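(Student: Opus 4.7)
The upper bound $\dim(\pi'' O)\leq\dim(X_0)$ is immediate because $\pi'' O\subseteq X_0$ and, by the Tarski--Seidenberg theorem, $\pi'' O$ is semi-algebraic and so has a well-defined dimension. The work lies in the lower bound $\dim(\pi'' O)\geq \dim(X_0)$.

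The plan is to reduce to a subset on which $\pi$ behaves well, and then use the finite-fiber hypothesis to transfer dimension from the source to the image. First I would use clause (2) of Definition~\ref{almostdefinition}: let $C\subseteq A$ be the relatively open dense subset with finite vertical sections. Since $C$ is dense in $A$ and $O$ is relatively open nonempty in $A$, the set $O\cap C$ is nonempty and relatively open in $A$. Replacing $O$ by $O\cap C$ only makes the conclusion stronger, so I may assume that every vertical section of $O$ is finite.

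Next I would apply Theorem~\ref{governtheorem} to $O$ viewed as a relatively open subset of the algebraic set $A$: there is a nonempty relatively open $O_1\subseteq O$ and an algebraic set $B\subseteq X_0$ such that $\pi\restriction O_1$ is an open map to $B$. Consequently $\pi'' O_1$ is a nonempty relatively open subset of $B$, hence has dimension equal to $\dim(B)$ at some point. Since the map $\pi\restriction O_1\to\pi'' O_1$ has finite fibers (inherited from $C$), the standard dimension formula for semi-algebraic maps \cite{vandendries:tame}, which adds the image dimension to the generic fiber dimension, gives $\dim(O_1)=\dim(\pi'' O_1)$.

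Finally, clause (1) of Definition~\ref{almostdefinition} applied to the nonempty relatively open set $O_1\subseteq A$ yields $\dim(O_1)\geq\dim(X_0)$, and combining, $\dim(\pi'' O)\geq \dim(\pi'' O_1)=\dim(O_1)\geq \dim(X_0)$, which together with the upper bound completes the proof. The main obstacle is purely bookkeeping: making sure that after passing to $O\cap C$ and then to $O_1$ via Theorem~\ref{governtheorem}, the set $O_1$ still has finite $\pi$-fibers, so that the dimension formula for semi-algebraic maps applies; this is automatic because $O_1\subseteq O\subseteq C$ and finiteness of fibers is inherited by subsets.
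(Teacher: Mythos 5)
Your argument is correct and is essentially the paper's own (much terser) proof: shrink $O$ into the dense relatively open set with finite vertical sections, then combine clause (1) of Definition~\ref{almostdefinition} with the fact that a definable map with finite fibers preserves dimension, plus the trivial upper bound $\pi''O\subseteq X_0$. The detour through Theorem~\ref{governtheorem} is harmless but unnecessary (and the aside that $\pi''O_1$ has dimension $\dim(B)$ is not needed and not quite justified if $B$ is not pure-dimensional), since the fiber-dimension formula already applies directly to $O\cap C$.
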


\begin{proof}
Fix the set $O\subset A$. By (2) of Definition~\ref{almostdefinition}, shrinking $O$ if necessary I may assume that $O$ has all vertical sections finite. The only way to account for (1) of Definition~\ref{almostdefinition} is to conclude that $\dim(\pi'' O)=\dim(X_0)$.
\end{proof}

\noindent For all of the following examples, fix a number $n\geq 2$.

\begin{example}
\label{7example}
Let $X_0=(\mathbb{R}^n)^n\times (\mathbb{R}^n)^n$, let $X_1=\mathbb{R}^n$, and $A=\{\langle x_i\colon i\in n, y_i\colon i\in n, z\rangle\in X_0\times X_1\colon$ for all $i\in n$, $(x_i-z)\cdot (y_i-z)=0\}$.  The set $A\subset X_0\times X_1$ is narrow.
\end{example}

\begin{proof}
The following claim will be useful. Let $d$ denote the Euclidean metric on $\mathbb{R}^n$.

\begin{claim}
\label{sclaim}
If $x_i$ for $i\in n$ are linearly independent points in $\mathbb{R}^n$ and $r_i\in\mathbb{R}$ are real numbers, then there are at most two points $z\in\mathbb{R}^n$ such that $d(x_i, z)=r_i$ holds for all $i\in n$.
\end{claim}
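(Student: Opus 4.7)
The plan is to reduce the system of $n$ quadratic equations $d(x_i,z)=r_i$ to a single quadratic equation together with $n-1$ independent linear equations, so that the solution set is the intersection of a line with a sphere, which classically has at most two points.

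First I would square each equation and expand: $d(x_i,z)^2=r_i^2$ becomes $|z|^2-2x_i\cdot z+|x_i|^2=r_i^2$. Subtracting the $i=0$ equation from the $i$-th equation for each $i=1,\dots,n-1$ eliminates the quadratic term $|z|^2$ and yields the linear system
\[
2(x_i-x_0)\cdot z \;=\; |x_i|^2-|x_0|^2-r_i^2+r_0^2 \qquad (i=1,\dots,n-1).
\]
So any solution $z$ of the original system satisfies these $n-1$ linear equations together with the single sphere equation $d(x_0,z)=r_0$; conversely any $z$ meeting all of these satisfies the original system.

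Next I would verify that the vectors $x_1-x_0,\dots,x_{n-1}-x_0$ are linearly independent in $\mathbb{R}^n$. If $\sum_{i=1}^{n-1}c_i(x_i-x_0)=0$ then $\sum_{i=1}^{n-1}c_ix_i-\bigl(\sum_{i=1}^{n-1}c_i\bigr)x_0=0$, and linear independence of $x_0,\dots,x_{n-1}$ forces every $c_i=0$. Hence the linear system above has as its solution set a one-dimensional affine subspace $\ell\subset\mathbb{R}^n$ (it is an affine translate of the orthogonal complement of the span of $x_1-x_0,\dots,x_{n-1}-x_0$, which is one-dimensional).

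Finally, the set of solutions of the original system equals $\ell\cap\{z:d(x_0,z)=r_0\}$, the intersection of a line with a Euclidean sphere, which is either empty, a single point, or a pair of points. In particular it has cardinality at most two, as required. There is no real obstacle here; the only thing to be slightly careful about is the passage from linear independence of the $x_i$ to linear independence of the differences $x_i-x_0$, handled above.
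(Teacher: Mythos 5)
Your proof is correct and follows essentially the same route as the paper's: square the distance equations, subtract the first from the rest to eliminate the quadratic term, observe the resulting linear system cuts out a line, and intersect that line with the sphere about $x_0$. The only difference is that you explicitly verify the linear independence of the differences $x_i-x_0$, which the paper asserts without proof.
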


\begin{proof} 
Write the equations for such points $z$: the $i$-th equation is $z\cdot z-2x_i\cdot z+x_i\cdot x_i=r_i^2$. Subtract the first equation from the others to remove the quadratic term and get a system of $n-1$ many equations $2(x_0-x_i)\cdot z=r_i^2-r_0^2+x_0\cdot x_0-x_i\cdot x_i$ for $0<i<n$. Since the vectors of coefficients on the left hand side are linearly independent, the set of solutions is a line. This line intersects the hypersphere around $x_0$ of radius $r_0$ in at most two points, and these are the only points $z\in\mathbb{R}^n$ such that $d(x_i, z)=r_i$ holds for all $i\in n$.
\end{proof}

To check the items of Definition~\ref{almostdefinition}, suppose that $O\subset X_0\times X_1$ is an open set with nonempty intersection with $A$. Counting the dimensions of $A\cap O$ yields the following. The $z$ and $x$ coordinates can be chosen arbitrarily within certain open subsets of $\mathbb{R}^n$ ($n+n^2$ many dimensions) after which the $y$-coordinates must be chosen on the hyperplane which contains $z$ and is perpendicular to $z-y_i$ ($n(n-1)$ many dimensions). Together, this yields $n+n^2+n(n-1)=2n^2$ many dimensions as desired.

Consider the set $P=\{\langle x_i, y_i, z\colon i\in n\rangle\colon$ the center points of the segments connecting $x_i$ and $y_i$ form a linearly independent set$\}\subset X_0\times X_1$. The set $P\subset X_0\times X_1$ is open as the linear independence is checked by the nonzero determinant test. The vertical sections of $P\cap A$ are finite by Claim~\ref{sclaim} and Thalet's theorem. To check that $P\cap A$ is dense in $A$, let $O\subset X_0\times X_1$ be an open set with nonempty intersection with $A$. To find a point in $P\cap O\cap A$, choose $z$ arbitrarily in a certain open set, and choose the segment centers $w_i$ for $i\in n$ in certain open sets in a linearly independent way, using the fact that every nonempty open set spans the whole space $\mathbb{R}^n$. Then complete the construction by finding the points $x_i, y_i$ for $i\in n$. 
\end{proof}

\begin{example}
\label{8example}
Let $X_0=(\mathbb{R}^n)^n\times (\mathbb{R}^n)^n$, let $X_1=\mathbb{R}^n$, and $A=\{\langle x_i\colon i\in n, y_i\colon i\in n, z\rangle\in X_0\times X_1\colon$ for all $i\in n$, $(x_i-y_i)\cdot (x_i-z)=0\}$.  The set $A\subset X_0\times X_1$ is narrow. 
\end{example}

\begin{proof}
There is again a preliminary claim.

\begin{claim}
If $\langle x_i\colon i\in n, y_i\colon i\in n\rangle\in X_0$ is a point such that the differences $x_i-y_i$ for $i\in n$ are linearly independent, then the vertical section of $A$ above the point has exactly one element.
\end{claim}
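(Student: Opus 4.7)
The plan is to reduce the claim to a standard linear-algebra fact by rewriting the defining equations of $A$ in the form of a square linear system in the unknown $z$.

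First I would expand the equation $(x_i-y_i)\cdot (x_i-z)=0$ that cuts out the vertical section. Distributing the dot product, this is equivalent to the linear equation $(x_i-y_i)\cdot z = (x_i-y_i)\cdot x_i$ in the single unknown $z\in\mathbb{R}^n$. So the vertical section of $A$ above the given point $\langle x_i, y_i\colon i\in n\rangle$ is exactly the solution set of the $n\times n$ linear system whose $i$-th row is the vector $x_i-y_i$ and whose $i$-th right-hand side is $(x_i-y_i)\cdot x_i$.

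Next, I would invoke the hypothesis: the vectors $x_i-y_i$ for $i\in n$ are linearly independent in $\mathbb{R}^n$, so they form a basis, and the coefficient matrix of the system is nonsingular. Therefore the system has exactly one solution $z\in\mathbb{R}^n$, which proves the claim.

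There is essentially no obstacle here; the only thing to double-check is that rewriting $(x_i-y_i)\cdot(x_i-z)=0$ in linear form really does retain all information (it does, since the dot product is bilinear and the right-hand side $(x_i-y_i)\cdot x_i$ depends only on the parameters $x_i,y_i$ that are fixed). The computation gives existence and uniqueness simultaneously, so the vertical section has cardinality exactly one as required.
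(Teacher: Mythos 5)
Your proof is correct and is exactly the paper's argument: the paper's one-line proof also observes that the equations $(x_i-y_i)\cdot(x_i-z)=0$ form a linear system in $z$ whose coefficient matrix has rows $x_i-y_i$, hence is nonsingular under the hypothesis. You have simply written out the expansion and the invertibility step explicitly.
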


\begin{proof}
The linear equations for the points in the vertical section have a matrix consisting of the vectors $x_i-y_i$ for $i\in n$.
\end{proof}

To verify the items of Definition~\ref{almostdefinition}, suppose that $O\subset X_0\times X_1$ is an open set with a nonempty intersection with $A$. To evaluate the dimension of $O\cap A$, first choose a point $z\in X_1$ in a certain open set ($n$ dimensions), then the points $y_i$ in certain open sets ($n^2$ many dimensions), and then points $x_i$ in the hyperplanes perpendicular to the segments $x_i-y_i$ and passing through $y_i$ ($n(n-1)$ dimensions). In total, it gives dimension $n+n^2+n(n-1)=n^2$ as desired. Now, consider the set $P\subset X_0\times X_1$ consisting of all points $\langle x_i, y_i\colon i\in n, z\rangle$ such that $x_i-y_i$ are linearly independent for $i\in n$. This is an open set by the nonzero determinant test for linear independence. The claim shows that $P\cap A$ has vertical sections of cardinality exactly one. To see that $P\cap A$ is dense in $A$, let $O\subset X_0\times X_1$ be an open set with nonempty intersection with $A$. To find a point in $O\cap P\cap A$, choose $z$ and $x_i$ for $i\in n$ in certain open neighborhoods, and then choose the points $y_i$ on the hyperspheres with diameters $[x_i, z]$, making sure that $x_i-y_i$ for $i\in n$ are linearly independent points. This is possible since every nonempty open subset of a hypersphere spans the whole space $\mathbb{R}^n$. 
\end{proof}

\begin{example}
\label{rexample}
Let $n\geq 2$ be a number and $X\subset (\mathbb{R}^n)^4$ be the closed set of all rectangles in $\mathbb{R}^n$. Let $x\in X$ be a point $P_{X}$-generic over $V$.

\begin{enumerate}
\item For every $i\in 4$, the point $x(i)\in\mathbb{R}^n$ is generic over $V$;
\item for every $i\neq j$ in $4$, $V[x(i)]$ and $V[x(j)]$ are mutually generic;
\item for any three pairwise distinct indices $i, j, k\in 4$, $\dim(V[x_i, x_j]/V[x_k])\geq n$.
\end{enumerate}
\end{example}

\begin{proof}
The first two items follow from the fact  that the projection from $X$ to any one or two coordinates is an open function to $\mathbb{R}^n$ or $(\mathbb{R}^n)^2$. This is easy and left to the reader. For the third item, first observe that the fourth point of a rectangle is a continuous function of the other three, and the projection from a graph of a continuous function is an open map. Thus, the projection from $X$ to any three coordinates is an open function to $Y$, the set of all right-angle triangles in $(\mathbb{R}^n)^3$. The dimension estimates for $P_Y$-generic triples now follow from Theorem~\ref{dimtheorem}, Proposition~\ref{dproposition}, and Examples~\ref{7example} and~\ref{8example}.
\end{proof}

\section{A preservation theorem}

The notion of Krull dimension of pairs of generic extension has a variation of balance associated to it, leading to a useful preservation theorem regarding certain balanced extensions of the choiceless Solovay model. This is explained in the present section.

\begin{definition}
A \emph{dimension characteristic} is a nonempty finite sequence whose entries are positive natural numbers or the $\infty$ symbol. Dimension characteristics of the same length are ordered by coordinatewise ordering. Let $t$ be a dimension characteristic. A tuple $\langle V[G_i]\colon i\in |t|+1\rangle$ of generic extensions has \emph{dimension characteristic $t$} if for every $j\in\dom(t)$, $t(j)$ is the largest number such that nonempty set $a\subset |t|+1$ of cardinality $j+1$ and every $k\in (|t|+1)\setminus a$, $\dim(V[G_i\colon i\in a]/V[G_k])\geq t(j)$.
\end{definition}

\begin{example}
\label{example3}
Let $n\geq 2$ be a number and let $Q_{rn}$ be the Cohen poset on the space of all rectangles in $\mathbb{R}^n$. Let $\langle x_i\colon i\in 4\rangle$ be a quadruple generic over $V$ for the poset $Q_{rn}$. Then each point $x_i$ is a Cohen-generic point of $\mathbb{R}^n$ and the quadruple $\langle V[x_i]\colon i\in 4\rangle$ of generic extensions has dimension characteristic $\langle \infty, n, 1\rangle$. This follows from Example~\ref{rexample}.
\end{example}

\begin{definition}
Let $P$ be a Suslin poset and $t$ be a dimension characteristic.

\begin{enumerate}
\item If $\bar p$ be a virtual condition in $P$, we say that $p$ is \emph{$t$-dimensionally balanced} if for every tuple $\langle V[G_i]\colon i\in |t|+1\rangle$ of generic extensions of dimension characteristic $\geq t$, every tuple $\langle p_i\colon i\in j\rangle$ of conditions such that $p_i\in V[G_i]$ and $p_i\leq \bar p$ has a common lower bound;
\item the poset $P$ is {$t$-dimensionally balanced} if for every condition $p\in P$ there is a $t$-dimensionally balanced virtual condition $\bar p\leq p$.
\end{enumerate}
\end{definition}

\begin{theorem}
\label{preservationtheorem}
Let $n\geq 2$ be a natural number. In every extension of the choiceless Solovay model which is cofinally $\langle n, n, 1\rangle$-dimensionally balanced, every nonmeager subset of $\mathbb{R}^n$ contains all vertices of a non-degenerate rectangle.
\end{theorem}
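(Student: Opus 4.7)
My plan is to derive a contradiction from the assumption that there is a nonmeager $A\subset\mathbb{R}^n$ in the balanced extension $W[H]$ containing no four vertices of a non-degenerate rectangle. First, I will invoke the standard Solovay-model analysis of sets of reals in balanced extensions~\cite{z:geometric} to fix a $P$-name $\tau$ in the ZFC ground $V$ and a condition $p\in H$ with $p\Vdash\tau$ is a nonmeager subset of $\mathbb{R}^n$ containing no non-degenerate rectangle; then, using cofinal balance, I will refine $p$ to a $\langle n,n,1\rangle$-dimensionally balanced virtual condition $\bar p$. Next, I will strengthen ``$\tau$ nonmeager'' to ``$\tau$ comeager in some basic open $O\subset\mathbb{R}^n$'' using the Baire property that every set of reals enjoys in the Solovay model.

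The core of the argument exploits a generic rectangle. I will let $Q$ be the Cohen forcing for non-degenerate rectangles in $O^4\subset(\mathbb{R}^n)^4$ and let $\vec x=\langle x_0,x_1,x_2,x_3\rangle$ be $Q$-generic over $V$. By Examples~\ref{rexample} and~\ref{example3} applied to $O$, each $x_i\in O$ is $V$-Cohen-generic for $\mathbb{R}^n$ and the quadruple $\langle V[x_i]:i\in 4\rangle$ has dimension characteristic $\langle\infty,n,1\rangle\geq\langle n,n,1\rangle$. The key intermediate claim is: for each $i\in 4$ there is $q_i\leq\bar p$ in $V[x_i]$ with $q_i\Vdash\check x_i\in\tau$. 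To prove it I will pass to the product forcing $P\times P_O$ and observe that under $(\bar p,O)$, the $P$-side forces $\tau[\dot H]$ comeager in $O$ in $V[\dot H]$, while mutual genericity makes the $P_O$-generic $\dot y$ Cohen-generic over $V[\dot H]$, so $\dot y\in\tau[\dot H]$ and hence $(\bar p,O)\Vdash\dot y\in\tau$; reinterpreting from the $V[x_i]$-perspective produces the desired $q_i$.

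Finally, I will apply the $\langle n,n,1\rangle$-balance of $\bar p$ to the quadruple $\langle V[x_i]:i\in 4\rangle$ and the conditions $\{q_i:i\in 4\}$ to obtain a common lower bound $q\in P$. Such $q\leq\bar p$ forces simultaneously $\{\check x_i:i\in 4\}\subset\tau$ (since $q\leq q_i$) and ``$\tau$ contains no non-degenerate rectangle'' (inherited from $\bar p$), yet $\vec x$ is by construction a non-degenerate rectangle --- this is the desired contradiction. The main obstacle will be the initial strengthening from nonmeagerness to comeagerness in a specific basic open $O$; handling this requires a careful transport of the Baire property from the realization $A$ in $W[H]$ back to a forcing-level property of the name $\tau$, after which the rest of the argument is a direct application of the dimensional balance supplied by Example~\ref{rexample}.
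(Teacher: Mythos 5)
Your overall architecture matches the paper's: a generic rectangle for the Cohen poset on rectangles, the dimension characteristic $\langle\infty,n,1\rangle$ from Example~\ref{rexample}/Example~\ref{example3}, and an application of dimensional balance to amalgamate four conditions each forcing one vertex into $\tau$. However, there are two genuine gaps.

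First, the step you yourself flag as the main obstacle --- upgrading ``$\tau$ is nonmeager'' to ``$\tau$ is comeager in some basic open $O$'' via the Baire property --- does not go through and is in fact the wrong move. The set $A=\tau/H$ lives in the balanced extension $W[H]$, not in the Solovay model $W$, and balanced extensions of $W$ need not satisfy ``every set of reals has the Baire property'' (balanced posets can add ultrafilters, Hamel bases, and the like). The paper avoids this entirely: fix a small intermediate model $V[K]$ capturing the definition of $\tau$ (a step you also gloss over --- $\tau$ is only definable from a real parameter, so one must pass to $V[K]$ rather than work over $V$); the set of points of $\mathbb{R}^n$ Cohen-generic over $V[K]$ is comeager in $W$, so if no strengthening of $\bar p$ in any further extension forced a Cohen-generic point into $\tau$, then $\bar p$ would force $\tau$ to be disjoint from a comeager set, contradicting nonmeagerness alone. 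No Baire property for $\tau$ is needed.

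Second, your key intermediate claim places the conditions $q_i\Vdash\check x_i\in\tau$ inside $V[x_i]$, and this is not justified. What the forcing theorem yields is a condition $q$ in the Cohen poset, an auxiliary poset $R$, and a $Q\times R$-name $\gs$ with $q\Vdash_Q R\Vdash\coll(\gw,<\kappa)\Vdash\gs\Vdash_P\dotxgen\in\tau$; the condition below $\bar p$ forcing $x_i\in\tau$ is only guaranteed to exist in a further extension $V[x_i][H_i]$. One must therefore choose filters $H_i\subset R$ mutually generic over $V[x_i\colon i\in 4]$ and invoke Proposition~\ref{productproposition}(2) to see that the tuple $\langle V[x_i][H_i]\colon i\in 4\rangle$ still has dimension characteristic $\geq\langle n,n,1\rangle$ before balance can be applied. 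Your product-forcing sketch with $P\times P_O$ and the phrase ``reinterpreting from the $V[x_i]$-perspective'' does not supply this; without the auxiliary forcing and the preservation of the dimension characteristic under mutually generic extensions, the hypothesis of $\langle n,n,1\rangle$-dimensional balance is not verified for the models in which the $q_i$ actually live.
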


\begin{proof}
Let $\kappa$ be an inaccessible cardinal. Let $P$ be a Suslin forcing which is $\langle n, n, 1\rangle$-dimensionally balanced cofinally below $\kappa$.  Let $W$ be the associated choiceless Solovay model and work in $W$. Suppose that $p\in P$ is a condition and $\tau$ is a $P$-name for a nonmeager subset of $\mathbb{R}^n$. I must find a rectangle and a strengthening of the condition $p$ which forces the rectangle to be a subset of $\tau$. The name $\tau$ and the condition $p$ are both definable from some parameter $z\in\cantor$ and a parameter in the ground model $V$. Let $V[K]$ be an intermediate extension by a poset of cardinality smaller than $\kappa$ such that $z\in V[K]$ and $P$ is $\langle n, n, 1\rangle$-dimensionally balanced  $V[K]$. 

Work in $V[K]$. By the balance assumption, there must be a $\langle n, n, 1\rangle$-dimensionally balanced virtual condition $\bar p\leq p$. Write $Q$ for the Cohen forcing associated with $\mathbb{R}^n$, adding a generic point $\dotxgen\in\mathbb{R}^n$. By the forcing theorem, there must be a condition $q\in Q$, a poset $R$, and a $Q\times R$-name $\gs$ for a condition in $P$ stronger than $\bar p$ such that $q\Vdash_Q R\Vdash\coll(\gw, <\kappa)\Vdash \gs\Vdash_P\dotxgen\in\tau$. Otherwise, in the model $W$, the virtual condition $\bar p$ would force $\tau$ to be disjoint from the comeager set of points in $\mathbb{R}^n$ which are Cohen generic over the model $V[K]$, contradicting the initial assumption on the name $\tau$.

Let $Q_{rn}$ be the Cohen poset on the space of rectangles in $\mathbb{R}^n$. In the model $W$, find a rectangle $\langle x_i\colon i\in 4\rangle$ generic over $V[K]$ for the poset $Q_{rn}$ below the condition $q^4$. Recall from Example~\ref{example3} that each point $x_i$ is generic over $V[K]$ for the poset $Q$ below the condition $q$ and the quadruple $\langle V[G_i]\colon i\in 4\rangle$ has dimension characteristic $\langle \infty, n, 1\rangle$. Let $H_i$ for $i\in 4$ be filters on the poset $R$ mutually generic over the model $V[x_i\colon i\in 4]$. By Proposition~\ref{productproposition}, the tuple $\langle V[x_i][H_i]\colon i\in 4\rangle$ has dimension characteristic $\langle \infty, n, 0\rangle$. Write $p_i=\gs/x_i, H_i\in P$. By the forcing theorem applied in the model $V[x_i, H_i]$, $p_i\leq p$ is a condition forcing $x_i\in\tau$.
By the balance assumption on the virtual condition $\bar p$, the set $\{p_i\colon i\in 4\}$ has a common lower bound in the poset $P$. That common lower bound forces each point in the rectangle $\{x_i\colon i\in 4\}$ to belong to the set $\tau$ as required.
\end{proof}

\section{A coloring poset}

The whole development in previous sections would be worthless if no useful posets of interesting dimension characteristics existed. One such a coloring poset is produced in this section.

Fix a number $n\geq 2$ and let $\Gamma_n$ denote the hypergraph on $\mathbb{R}^n$ of arity four consisting of all non-degenerate rectangles. In this section, I define a balanced Suslin poset which adds a total $\Gamma_n$-coloring. After that, I show that this poset does not add a total $\Gamma_{n+1}$-coloring if the ground model is taken to be the choiceless Solovay model. To start, for every subfield $F\subset\mathbb{R}$, say that an algebraic set is visible from $F$ if there is a polynomial defining it whose coefficients all belong to $F$. Define an equivalence relation $E_F$ on $\mathbb{R}^n\setminus F^n$ by connecting points $x_0, x_1$ if either $x_0=x_1$ or there are algebraic sets $A_0, A_1\subseteq\mathbb{R}^n$ visible from $F$ which are distinct from $\mathbb{R}^n$ and $x_0\in A_0$ and $x_1\in A_1$, and there are polynomial functions $f_0, f_1\colon \mathbb{R}^n\to\mathbb{R}^n$ visible from $F$ such that $f_0(x_0)=x_1$ and $f_1(x_1)=x_0$. The connection between this equivalence relation and rectangles is encapsulated in the following simple proposition.

\begin{proposition}
\label{equiproposition}
Let $F\subset\mathbb{R}$ be a subfield and $\{x_i\colon i\in 4\}\subset\mathbb{R}^n$ be a rectangle such that $x_0, x_1\in F^n$. Then

\begin{enumerate}
\item either both points $x_2, x_3$ belong to $F^n$ or neither of them does;
\item if the points $x_2, x_3$ do not belong to $F^n$, then they are $E_F$-equivalent.
\end{enumerate}
\end{proposition}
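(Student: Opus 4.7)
The plan is to split into two geometric cases according to whether $\{x_0,x_1\}$ is a side of the rectangle or a diagonal, and in each case write down an affine polynomial map with coefficients in $F$ linking $x_2$ and $x_3$, together with a proper algebraic set visible from $F$ that contains them. These two cases handle everything because any pair of vertices of a rectangle is either adjacent or opposite. Throughout I use the two basic facts that the diagonals of a rectangle bisect each other at equal midpoints and meet the sides at right angles, i.e.\ at a vertex $v$ opposite to $w$ we have $(u-v)\cdot(u'-v)=0$ for the two vertices $u,u'$ adjacent to $v$.

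First I would treat the \emph{side} case: relabel so that $x_2$ is adjacent to $x_0$ and $x_3$ is adjacent to $x_1$. Then $x_3-x_2=x_1-x_0\in F^n$, and $(x_2-x_0)\cdot(x_1-x_0)=0$, $(x_3-x_1)\cdot(x_0-x_1)=0$. Set $A_0=\{y\in\mathbb{R}^n:(y-x_0)\cdot(x_1-x_0)=0\}$ and $A_1=\{y\in\mathbb{R}^n:(y-x_1)\cdot(x_0-x_1)=0\}$; these are hyperplanes (since $x_0\neq x_1$, as the rectangle is nondegenerate) visible from $F$. The polynomial functions $f_0(y)=y+(x_1-x_0)$ and $f_1(y)=y+(x_0-x_1)$, visible from $F$, satisfy $f_0(x_2)=x_3$ and $f_1(x_3)=x_2$. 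The relation $x_3-x_2\in F^n$ immediately yields (1), and the data just listed yield (2).

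For the \emph{diagonal} case, the midpoints of $\{x_0,x_1\}$ and $\{x_2,x_3\}$ coincide, so $x_2+x_3=x_0+x_1\in F^n$; this again gives (1) since $x_2\in F^n$ iff $x_3\in F^n$. Furthermore, both $x_2$ and $x_3$ are vertices opposite to the midpoint along the other diagonal, and the right-angle condition at each of them gives $(x_2-x_0)\cdot(x_2-x_1)=0$ and $(x_3-x_0)\cdot(x_3-x_1)=0$, so $x_2,x_3$ lie on the sphere $A=\{y\in\mathbb{R}^n:(y-x_0)\cdot(y-x_1)=0\}$ with diameter $[x_0,x_1]$. This sphere is a proper algebraic subset of $\mathbb{R}^n$ visible from $F$. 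The reflection $f(y)=(x_0+x_1)-y$ is polynomial with coefficients in $F$ and swaps $x_2$ with $x_3$, so setting $A_0=A_1=A$ and $f_0=f_1=f$ gives (2).

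I do not expect any genuine obstacle here; the content is elementary Euclidean geometry. The only thing to be careful about is the proviso in the definition of $E_F$ that the algebraic sets $A_0,A_1$ be \emph{distinct from $\mathbb{R}^n$}, which is why I explicitly record that in the side case $A_0,A_1$ are proper hyperplanes (using $x_0\neq x_1$) and in the diagonal case $A$ is a proper sphere; nondegeneracy of the rectangle prevents any of these defining equations from collapsing.
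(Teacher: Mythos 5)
Your proof is correct and follows essentially the same route as the paper: item (1) via the fact that the fourth vertex is an $F$-affine combination of the other three, and item (2) by the opposite/adjacent case split, using the sphere with diameter $[x_0,x_1]$ plus the point reflection through the midpoint in one case, and the two perpendicular hyperplanes plus an $F$-visible polynomial map swapping $x_2$ and $x_3$ in the other (the paper uses the reflection in the perpendicular bisector hyperplane where you use the translation by $x_1-x_0$, an immaterial difference). Your explicit check that the witnessing algebraic sets are proper is exactly the right point of care.
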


\begin{proof}
The first item is an immediate corollary of the fact that the fourth point in a rectangle is a linear combination of the other three. For the second item, write $S$ for the segment connecting $x_0$ and $x_1$ and divide into two cases. In the first case, the points $x_0, x_1$ are opposite on the rectangle $R$. In this case, write $y\in F^n$ for the midpoint of $S$, and observe that both $x_2, x_3$ lie on the hypersphere centered at $y$ containing $x_0$. Also, $x_2, x_3$ can be obtained from each other by the reflection about the point $y$. As both the hypersphere and the reflection are visible from $F$, $x_2\mathrel{E_F}x_3$ follows. In the second case, the points $x_0, x_1$ are adjacent on the rectangle $R$. Here, note that $x_2, x_3$ belong to the hyperplanes perpendicular to $S$ and containing $x_0$ or $x_1$ respectively. The points $x_2, x_3$ can be obtained from each other by the reflection about the hypeplane perpendicularly bisecting the segment $S$. As both the hyperplanes and the reflection are visible from $F$, $x_2\mathrel{E_F}x_3$ holds and the proposition follows.
\end{proof}

\noindent The appearance of a hypersphere in this proof is the only reason why Noetherian topologies of this paper are in the algebraic category as opposed to the much simpler affine category. Now, to define the coloring poset, fix a Borel ideal $I$ on $\gw$ containing all singletons and such that it is not generated by countably many sets. A particular choice of $I$ seems to be irrelevant beyond these demands, the summable ideal will do.

\begin{definition}
\label{pndefinition}
The poset $P_n$ consists of all countable functions $p$ such that there is a countable real closed subfield $\supp(p)\subset\mathbb{R}$ such that $\dom(p)=\supp(p)^n$ and $p$ is a $\Gamma_n$-coloring. The ordering is defined by $q\leq p$ if 
$p\subseteq q$ and for every $E_{\supp(p)}$-class $C\subset\dom(q)$, $q\restriction C$ is injective and its range belongs to $I$.
\end{definition}

\begin{proposition}
$P$ is a $\gs$-closed partial ordering.
\end{proposition}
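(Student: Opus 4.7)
The plan is to take the natural candidate $p=\bigcup_m p_m$ with $\supp(p)=\bigcup_m\supp(p_m)$ and show that it is a common lower bound of any descending sequence $p_0\geq p_1\geq\cdots$ in $P_n$.

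First I would verify that $p\in P_n$. A directed union of countable real closed subfields of $\mathbb{R}$ is again a countable real closed subfield, since the square root of any positive element of the union and a root of any odd-degree polynomial with coefficients in the union are already witnessed at some finite level of the chain. The equality $\dom(p)=\supp(p)^n$ reduces to the identity $(\bigcup_m F_m)^n=\bigcup_m F_m^n$ for an increasing chain of sets $F_m$, valid because an $n$-tuple has only finitely many coordinates. Finally, $p$ is a $\Gamma_n$-coloring because the four vertices of any non-degenerate rectangle in $\dom(p)$ lie in a common $\dom(p_m)$ (take $m$ to be the maximum of their levels), where $p$ agrees with $p_m$ and $p_m$ is by assumption not constant on any such rectangle.

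The real work is verifying $p\leq p_k$ for each fixed $k$. The only condition that is not immediate is that for every $E_{\supp(p_k)}$-class $C$ meeting $\dom(p)$, $p\restriction C$ is injective with range in $I$. At first glance, $p\restriction C$ decomposes as the countable increasing union $\bigcup_{m>k}p_m\restriction(C\cap\dom(p_m))$ of maps that are each individually injective with range in $I$ by $p_m\leq p_k$; however, since $I$ (e.g.\ the summable ideal) is not closed under countable increasing unions of its members, this bookkeeping alone does not yield $p''C\in I$. I expect this to be the only real difficulty in the proof.

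The obstacle dissolves once one notices the following field-theoretic closure property of $E_{\supp(p_k)}$-classes: if $x\in C\cap\supp(p_m)^n$ and $y=f(x)$ for some polynomial function $f$ with coefficients in $\supp(p_k)\subseteq\supp(p_m)$, then $y\in\supp(p_m)^n$ as well, since $\supp(p_m)$ is a field. Hence as soon as $C$ meets $\supp(p_m)^n$, the entire class $C$ is contained in $\supp(p_m)^n$. Letting $m_0$ be the least such $m$, the inclusion $C\subseteq\mathbb{R}^n\setminus\supp(p_k)^n$ (coming from the very domain of the relation $E_{\supp(p_k)}$) forces $m_0>k$, and $C\subseteq\dom(p_{m_0})$. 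Then $p\restriction C=p_{m_0}\restriction C$, and applying the ordering $p_{m_0}\leq p_k$ directly to the class $C$ yields both the injectivity of $p\restriction C$ and the membership $p''C=p_{m_0}''C\in I$, with no countable-union argument required.
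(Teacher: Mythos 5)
Your $\sigma$-closure argument is correct, and it supplies exactly the detail the paper leaves implicit: the paper merely asserts that $\bigcup_m p_m$ is a lower bound, while you correctly identify that the danger is an $E_{\supp(p_k)}$-class smeared across infinitely many levels (which would only give a countable union of sets in $I$) and dispose of it with the field-closure observation that a class meeting $\supp(p_m)^n$ is contained in it. That observation is the same fact the paper records as ``$\dom(q)$ is invariant under $E_{\supp(p)}$,'' so on this half you are on the paper's route, just more explicit.

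The gap is that you prove only half the proposition. The statement also asserts that $\leq$ is a partial ordering, and transitivity is not automatic from Definition~\ref{pndefinition}: if $r\leq q\leq p$, you must check, for an $E_{\supp(p)}$-class $C\subseteq\dom(r)$, that $r\restriction C$ is injective with range in $I$, and the hypotheses only speak of $E_{\supp(p)}$-classes inside $\dom(q)$ and $E_{\supp(q)}$-classes inside $\dom(r)$. The paper's proof spends its first sentence precisely on this: $\dom(q)$ is $E_{\supp(p)}$-invariant, so either $C\subseteq\dom(q)$ (then $r\restriction C=q\restriction C$ and $q\leq p$ applies), or $C\cap\dom(q)=\emptyset$, in which case one uses that $E_{\supp(p)}\restriction(\mathbb{R}^n\setminus\dom(q))\subseteq E_{\supp(q)}$ (algebraic sets and polynomials visible from $\supp(p)$ are visible from the larger field $\supp(q)$), so $C$ sits inside a single $E_{\supp(q)}$-class and $r\leq q$ applies. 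Your field-closure lemma already gives the invariance statement, so the repair is short, but as written the proposal does not establish that $P_n$ is a partial order, and without transitivity the notion of a descending chain and of a common lower bound is not even well behaved.
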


\begin{proof}
Observe that if $q\leq p$ holds then $\dom(q)$ is invariant under the equivalence relation $E_{\supp(p)}$ and on the set $\mathbb{R}^n\setminus\dom(q)$, $E_{\supp(p)}\subseteq E_{\supp(q)}$ holds. The transitivity of the relation $\leq $ on $P$ follows. For the $\gs$-closure, if $\langle p_n\colon n\in\gw\rangle$ is a descending chain of conditions, then $\bigcup_np_n$ is their common lower bound.
\end{proof}

\noindent It is important to find a precise criterion for compatibility of conditions in the poset $P_n$.

\begin{proposition}
\label{compproposition}
Let $p_0, p_1\in P_n$ be conditions. The following are equivalent:

\begin{enumerate}
\item $p_0, p_1$ are compatible;
\item for every point $x_0\in\mathbb{R}^n$ there is a common lower bound of $p_0, p_1$ containing $x_0$ in its domain;
\item 

\begin{enumerate}
\item $p_0\cup p_1$ is a function and $\Gamma_n$-coloring;
\item for every $E_{\supp(p_0)}$-class $C$, $p_1\restriction C$ is injective and its range belongs to $I$;
\item same as (b) except with $p_0$ and $p_1$ interchanged.
\end{enumerate}
\end{enumerate}
\end{proposition}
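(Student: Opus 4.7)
The implication $(2)\Rightarrow(1)$ is immediate. For $(1)\Rightarrow(3)$, let $q$ be a common lower bound and set $F=\supp(q)$; since $F\supseteq\supp(p_0)$, any $E_{\supp(p_0)}$-class $C$ is closed under the polynomial maps with coefficients in $\supp(p_0)$ used in its definition, so $C$ is either entirely contained in $F^n=\dom(q)$ or disjoint from it. In the first case the ordering $q\leq p_0$ forces $q\restriction C$ to be injective with range in $I$, and this transfers to $p_1\restriction C\subseteq q\restriction C$; in the second case $p_1\restriction C$ is empty (since $\dom(p_1)\subseteq F^n$). This yields (3b); (3c) is symmetric and (3a) is immediate from $p_0\cup p_1\subseteq q$.

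The substantive direction is $(3)\Rightarrow(2)$. Given $x_0\in\mathbb{R}^n$, pick a countable real closed subfield $F\subseteq\mathbb{R}$ containing $\supp(p_0)\cup\supp(p_1)$ and all coordinates of $x_0$, and construct $q$ with $\supp(q)=F$, so that $x_0\in\dom(q)$. On $\dom(p_0)\cup\dom(p_1)$ set $q=p_0\cup p_1$, which is a partial $\Gamma_n$-coloring by (3a). The remaining job is to color the countable set $X=F^n\setminus(\dom(p_0)\cup\dom(p_1))$ so that $q$ becomes a total $\Gamma_n$-coloring of $F^n$ and, for every $E_{\supp(p_0)}$- and every $E_{\supp(p_1)}$-class $C\subseteq F^n$, $q\restriction C$ is injective with range in $I$.

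The coloring of $X$ uses a doubly-indexed color grid. Enumerate the $E_{\supp(p_0)}$-classes meeting $X$ as $\{C^0_i:i\in\gw\}$ and the $E_{\supp(p_1)}$-classes meeting $X$ as $\{C^1_j:j\in\gw\}$, partition $X=\bigsqcup_{i,j}X_{ij}$ with $X_{ij}=C^0_i\cap C^1_j\cap X$, enumerate $X_{ij}=\{x^{ij}_k:k<|X_{ij}|\}$, and set $q(x^{ij}_k)=n_{ijk}$, where $(n_{ijk})$ is an injection $\gw^3\to\gw$ whose image sits strictly above $\mathrm{range}(p_0\cup p_1)$ and such that for every $i$ the row $\{n_{ijk}:j,k\in\gw\}$ lies in $I$ and for every $j$ the column $\{n_{ijk}:i,k\in\gw\}$ lies in $I$; for the summable ideal, $n_{ijk}=N+2^i\cdot 3^j\cdot 5^{k+1}$ works with $N$ larger than any color used by $p_0\cup p_1$. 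Injectivity of the grid and the disjointness of its image from $\mathrm{range}(p_0\cup p_1)$ rule out any monochromatic rectangle with a vertex in $X$, while (3a) rules out monochromatic rectangles entirely in $\dom(p_0)\cup\dom(p_1)$. On each class $C=C^0_i$ the restriction $q\restriction C$ splits as the disjoint union of $p_1\restriction(C\cap\dom(p_1))$, injective with range in $I$ by (3b), and the new part $\{(x^{ij}_k,n_{ijk}):j,k\}$, injective with range in $I$ by the row condition; symmetric reasoning with (3c) and the column condition handles $E_{\supp(p_1)}$-classes. The main obstacle is precisely the design of the grid $(n_{ijk})$: the color assigned at a point of $X_{ij}$ is simultaneously constrained by a ``row budget'' in $I$ for $C^0_i$ \emph{and} a ``column budget'' in $I$ for $C^1_j$, and it is this two-dimensional small-set requirement that motivates fixing a concrete Borel ideal such as the summable one, where separated geometric decay in the $i$ and $j$ directions yields convergent row and column sums.
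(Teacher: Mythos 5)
Your reductions $(2)\Rightarrow(1)$ and $(1)\Rightarrow(3)$ are fine and essentially what the paper does. The gap is in $(3)\Rightarrow(2)$, at the point where you require the grid of new colors to have image ``strictly above $\rng(p_0\cup p_1)$'' (or even just disjoint from it). Definition~\ref{pndefinition} places no restriction on the range of a condition: an injective coloring with domain $\supp(p)^n$ is a legitimate condition, and its range can be all of $\gw$, so in general there are no colors left above or outside $\rng(p_0\cup p_1)$. Both of your key verifications lean on this impossible disjointness: the claim that a rectangle with exactly one vertex in $X$ cannot be monochromatic, and the injectivity of $q\restriction C$ when a class $C$ meets both $\dom(p_1)$ and $X$.

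The repair cannot be purely combinatorial; it needs the geometry encoded in Proposition~\ref{equiproposition}, which your argument never invokes in this direction. For a rectangle with exactly one vertex $x$ in $X$, closure of $\dom(p_0)$ and $\dom(p_1)$ under the fourth-vertex operation shows the remaining three vertices cannot all lie in one domain; so (say) two lie in $\dom(p_0)$ and one, $y$, lies in $\dom(p_1)\setminus\dom(p_0)$, and Proposition~\ref{equiproposition} gives $x\mathrel{E_{\supp(p_0)}}y$. Hence it suffices that $q(x)$ avoid the single set $p_1''[x]_{E_{\supp(p_0)}}\cup p_0''[x]_{E_{\supp(p_1)}}$, which lies in $I$ by (3b) and (3c) and is therefore coinfinite; this targeted avoidance is exactly what hypotheses (3b) and (3c) are for, and it is how the paper proceeds (choosing an infinite $b\in I$ not covered by finitely many of the countably many sets of the form $\rng(p_i\restriction C)$ --- here the non-countable-generation of $I$ is used --- and coloring $F^n\setminus\dom(p_0\cup p_1)$ injectively into $b$ while dodging those two sets at each point). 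The same avoidance simultaneously restores the injectivity of $q\restriction C$ on each class. Your row/column grid is a reasonable device for keeping the new colors' contribution to each class inside $I$, but as written the construction cannot be carried out, and without the geometric step it would not rule out monochromatic rectangles even if it could.
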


\begin{proof}
(2) implies (1), which in turn implies (3) by the definition of the ordering on $P_n$. I must argue that (3) implies (2). Let $x_0\in \mathbb{R}^n$ be an arbitrary point.   Choose a countable field $F\subset\mathbb{R}$ containing both $\supp(p_0)$ and $\supp(p_1)$ as subsets and all corrdinates of the point $x_0$ as elements. Write $c=F^n\setminus\dom(p_0\cup p_1)$, $E_0=E_{\supp(p_0)}$, and $E_1=E_{\supp(p_1)}$. Choose an infinite set $b\subset\gw$ in the ideal $I$ which cannot be covered by finitely many singletons and sets of the form $\rng(p_1\restriction C)$ where $C$ is an $E_0$-class, or of the form $\rng(p_0\restriction C)$ where $C$ is an $E_1$-class. This is possible by items (2) and (3) and the choice of the ideal $I$. Now, let $q\colon F^n\to\gw$ be a function extending $p_0\cup p_1$, such that $q\restriction c$ is an injection, and such that for every point $x\in c$, $q(x)\in b\setminus (p_0''[x]_{E_1}\cup p_1''[x]_{E_0})$. This is possible by the claim and the choice of the set $b$. I claim that $q$ is the requested common lower bound of $p_0$ and $p_1$.

First of all, check that $q$ is a $\Gamma$-coloring. Suppose that $R$ is a rectangle in $F^n=\dom(q)$ and argue that it is not $q$-monochromatic. If it has all four vertices in $\dom(p_0\cup p_1)$ then it is not monochromatic by item (a). If it has more than one vertex in the set $c$, then it is not monochromatic as $q\restriction c$ is an injection. If it has exactly one vertex in $c$, call it $x$, then the three others cannot belong to the same condition by the closure properties of $\dom(p_0)$ and $\dom(p_1)$. Thus, there must be a condition (say $p_0$) such that $R$ has two vertices (say $z, w$) in its domain and the third (say $y$) is not in its domain (belongs to $\dom(p_1\setminus p_0)$). I will show that $x\mathrel{E_1}y$ holds; then $q(x)\neq q(y)$ by the choice of the coloring $q$ and $R$ is not monochromatic. There are two cases. Suppose first that the two points $z, w$ are opposite in the rectangle $\{z, w, y, x\}$. Then both points $y, x$ lie on the hypersphere of which the segment between $z, w$ is a diameter, and they can be moved one to another by the reflection around the midpoint of this segment, showing that $x\mathrel{E_1} y$. Suppose now  that the points $z, w$ are adjacent in the rectangle $\{z, w, y, x\}$. Then the points $y, x$ lie on the hyperplanes perpendicular to the segment connecting $z, w$ and containing $z$ and $w$ respectively, and they can be moved to one another by the reflection around the hyperplane perpendicularly bisecting the segment connecting $z$ and $w$. Thus,  $x\mathrel{E_1}y$ holds in this case too.

Finally, prove that $q\leq p_0$ holds; the case $q\leq p_1$ is symmetric. Let $C\subset\dom(q)$ be an $E_0$-class. The choice of the coloring $q$ implies that $q''C\subset b\cup p_1''C$ which then is a set in the ideal $I$. Moreover, $q\restriction C\cap\dom(p_1)$ is an injection by item (b), and $q\restriction C\cap c$ is an injection as well which shares no values with $q\restriction C\cap\dom(p_1)$ by the choice of $q$. In total, $q\restriction C$ is an injection as required.
\end{proof}

\begin{corollary}
$P_n$ is a Suslin forcing.
\end{corollary}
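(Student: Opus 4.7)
The plan is to exhibit a natural Borel coding of $P_n$ by reals and verify that, under this coding, the set of conditions, the ordering, and the (in)compatibility relation are all analytic---the three requirements for Suslin forcing. A condition $p\in P_n$ is determined by its support $\supp(p)$, a countable real closed subfield of $\mathbb{R}$, together with a function from $\supp(p)^n$ to $\gw$. I would code $p$ by a pair $\langle\vec r,\vec m\rangle\in\mathbb{R}^\gw\times\gw^\gw$ in which $\vec r$ enumerates $\supp(p)$ and, via the induced enumeration of $\supp(p)^n$, $\vec m$ lists the values of $p$.

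First I would check that the set of codes of elements of $P_n$ is Borel. That $\rng(\vec r)$ is a real closed subfield of $\mathbb{R}$ is a Borel property of $\vec r$; and the condition that $p$ is a $\Gamma_n$-coloring is the statement that for every four indices $i_0,\dots,i_3$ whose corresponding points in $\mathbb{R}^n$ form a non-degenerate rectangle (a Borel relation on $(\mathbb{R}^n)^4$), the values $\vec m(i_0),\dots,\vec m(i_3)$ are not all equal---a $\mathbf\Pi^0_2$ condition on the code.

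Next I would verify the ordering $\leq$ is Borel. Unpacked, $q\leq p$ says $p\subseteq q$ (Borel in the codes) together with: for every $E_{\supp(p)}$-class $C\subset\dom(q)$, $q\restriction C$ is injective with range in $I$. The key sub-observation is that for any countably-coded subfield $F\subset\mathbb{R}$, the relation $E_F$ is Borel with parameter $F$: in its definition, the existential quantifiers over ``algebraic sets visible from $F$'' and ``polynomial functions visible from $F$'' reduce to existentials over tuples in the countable set $F$, followed by Borel tests (evaluation of the polynomials at given points). Granted this, the stated property of $q\restriction C$---quantified over the countably many $E_{\supp(p)}$-classes in $\dom(q)$, with the Borel ideal $I$ as a fixed parameter---is Borel.

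Finally, rather than grappling with incompatibility directly from the definition of $\leq$, I would invoke Proposition~\ref{compproposition}: $p_0,p_1\in P_n$ are compatible iff clause (3) of that proposition holds. Each of the three sub-clauses (a), (b), (c) is Borel in the codes by exactly the same reasoning used for the ordering, so compatibility, and hence incompatibility, is Borel. The only non-routine step is verifying that $E_F$ is Borel in $F$; all other verifications are direct complexity counts on the coded condition and the ideal $I$.
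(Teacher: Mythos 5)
Your proposal is correct and follows the same route as the paper: the paper simply asserts that the poset and its ordering are Borel and cites Proposition~\ref{compproposition} for the Borelness of compatibility, which is exactly your third step. Your additional details---the explicit coding and the observation that $E_F$ is Borel in a code for $F$ because the quantifiers over visible algebraic sets and polynomial functions reduce to countable quantifiers over tuples of coefficients from $F$---are just the unpacking of what the paper calls ``immediate.''
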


\begin{proof}
It is immediate that the poset $P$ and its order relation are Borel. Proposition~\ref{compproposition} shows that the compatibility relation is Borel as well.
\end{proof}

\begin{corollary}
\label{unioncorollary}
$P_n$ forces the union of the generic filter to be a total $\Gamma_n$-coloring.
\end{corollary}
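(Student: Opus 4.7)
The plan is to verify the three required properties separately: that $\bigcup G$ is a function, that its domain is all of $\mathbb{R}^n$, and that no non-degenerate rectangle is monochromatic under it.

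First I would note that conditions in the generic filter $G$ are pairwise compatible, and by the definition of the ordering on $P_n$, whenever $q\leq p$ we have $p\subseteq q$. Consequently any two conditions in $G$ share a common extension in $G$ which is a function, so they must agree on the intersection of their domains; hence $\bigcup G$ is a well-defined function. Second, for totality, for each $x\in\mathbb{R}^n$ I would show that the set $D_x=\{p\in P_n\colon x\in\dom(p)\}$ is dense below any condition. Given $p\in P_n$, Proposition~\ref{compproposition}(2) applied to the trivially compatible pair $p_0=p_1=p$ produces a common lower bound $q\leq p$ with $x\in\dom(q)$; by genericity $G$ meets $D_x$, so $x\in\dom(\bigcup G)$.

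For the coloring condition, I would use the directedness of $G$. Let $R=\{x_0,x_1,x_2,x_3\}$ be a non-degenerate rectangle in $\mathbb{R}^n$. By the previous step there are conditions $p_i\in G$ with $x_i\in\dom(p_i)$ for each $i\in 4$, and since $G$ is a filter there is a single $p\in G$ below all four. The vertices of $R$ then lie in $\dom(p)$, and because $p$ is itself a $\Gamma_n$-coloring by Definition~\ref{pndefinition}, the rectangle $R$ is not $p$-monochromatic, hence not $(\bigcup G)$-monochromatic.

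The only place where there is anything non-trivial to check is the density of $D_x$, and this is essentially already contained in the proof of Proposition~\ref{compproposition}; no further combinatorial work is needed. No obstacle of real substance appears in this corollary.
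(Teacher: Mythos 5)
Your proof is correct and follows the same route as the paper: the paper likewise deduces density of $D_x$ from Proposition~\ref{compproposition} (and its own proof of Proposition~\ref{balancedproposition} applies that proposition with $p_0=p_1$ exactly as you do) and then concludes by a genericity argument. The additional details you supply (well-definedness of $\bigcup G$ from $q\leq p\Rightarrow p\subseteq q$, and directedness of the filter to place all four vertices of a rectangle inside a single condition) are exactly the routine steps the paper leaves implicit.
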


\begin{proof}
$P_n$ is a $\gs$-closed partial order of partial $\Gamma_n$-colorings. In addition, by Proposition~\ref{compproposition}, for each point $x_0\in \mathbb{R}^n$, the set of conditions containing $x_0$ in its domain is an open dense subset of $P_n$. The corollary follows by a genericity argument.
\end{proof}

\begin{proposition}
\label{balancedproposition}
In the poset $P_n$,
\begin{enumerate}
\item for every total $\Gamma_n$-coloring $c$, the pair $\langle\coll(\gw, \mathbb{R}^n), \check c\rangle$ is balanced;
\item every balanced pair is equivalent to one as in (1);
\item distinct total $\Gamma_n$-colorings yield inequivalent balanced pairs.
\end{enumerate}

\noindent In particular, the poset $P_n$ is balanced if and only if the Continuum Hypothesis holds.
\end{proposition}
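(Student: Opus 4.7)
For item (1), given a total $\Gamma_n$-coloring $c$, I would show $\langle\coll(\gw,\mathbb{R}^n),\check c\rangle$ is balanced by reducing to Proposition~\ref{compproposition}. Unpacking the virtual-condition setup, a $P_n$-condition $p\in V[G]$ below this pair amounts (via compatibility with every restriction $c\restriction F^n$ for countable real closed $F\subseteq\mathbb{R}^V$, which is a Borel and hence absolute condition) to three requirements: (i) $p$ agrees with $c$ on $\dom(p)\cap(\mathbb{R}^V)^n$ and their union is a $\Gamma_n$-coloring; (ii) $p$ restricted to any $E_{\mathbb{R}^V}$-class is injective with range in $I$; and (iii) $c$ restricted to any $E_{\supp(p)}$-class (intersected with its domain) is injective with range in $I$. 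Given $p_0\in V[G_0]$ and $p_1\in V[G_1]$ both below $\check c$, the standard balance framework restricts us to pairs with $V[G_0]\cap V[G_1]\cap\mathbb{R}=\mathbb{R}^V$, so $\supp(p_0)\cap\supp(p_1)\subseteq\mathbb{R}^V$; agreement on the overlap then reduces to agreement with $c$ via (i), verifying clause (a) of Proposition~\ref{compproposition}. Clauses (b) and (c) follow by combining (ii) and (iii) across the two conditions, using Proposition~\ref{equiproposition} to track how $E$-classes for different supports overlap.

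For items (2) and (3), given a balanced virtual condition $\bar p$, I construct the total $\Gamma_n$-coloring $c$ pointwise. For each $x\in\mathbb{R}^n$, the density argument in the proof of Corollary~\ref{unioncorollary} provides, in some collapse extension of $V$, a condition $q\leq\bar p$ with $x\in\dom(q)$; balance applied to two such witnesses $q_0,q_1$ in different extensions forces them to agree on $x$ (otherwise no common lower bound could exist, as a lower bound is a function), so I set $c(x)=q_0(x)$. That $c$ is a total $\Gamma_n$-coloring follows from the same density argument applied to all four vertices of a non-degenerate rectangle $R$ simultaneously, yielding a single $q\leq\bar p$ with $R\subseteq\dom(q)$, and then invoking the $\Gamma_n$-coloring property of $q$. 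Equivalence of $\bar p$ with $\langle\coll(\gw,\mathbb{R}^n),\check c\rangle$ is then a routine verification using the characterization of ``below $\check c$'' from the previous paragraph. For item (3), distinct colorings $c\neq c'$ differ at some $x$, and witnesses $q\leq\check c$, $q'\leq\check{c'}$ containing $x$ in their domains assign $x$ incompatible values, so the virtual conditions cannot be equivalent.

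The final CH equivalence assembles as follows: $P_n$ is balanced iff every condition extends to a balanced virtual one, iff (by items (1) and (2)) every condition extends to a total $\Gamma_n$-coloring; the $\sigma$-closure of $P_n$ together with a straightforward induction reduces this to the bare existence of a total $\Gamma_n$-coloring on $\mathbb{R}^n$, which by the Komj\'ath result cited in the introduction is equivalent to CH in ZFC. The main obstacle I anticipate is item (1): correctly formalizing the characterization of ``$p\leq\check c$'' as the meet over countable restrictions and then verifying clauses (b) and (c) of Proposition~\ref{compproposition} for $p_0$ and $p_1$, which requires a careful analysis of the interplay between the equivalence relations $E_{\mathbb{R}^V}$, $E_{\supp(p_0)}$, and $E_{\supp(p_1)}$, and deploys both the non-countable-generatedness of the ideal $I$ and the algebraic structure of the $E_F$-relations recorded in Proposition~\ref{equiproposition}.
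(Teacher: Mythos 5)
There is a genuine gap in your treatment of item (1), and it sits exactly at the point you flag as ``the main obstacle'' without resolving it. To verify clause (a) of Proposition~\ref{compproposition} for $p_0\in V[G_0]$ and $p_1\in V[G_1]$, the dangerous configuration is a non-degenerate rectangle $R$ with two vertices in $\dom(p_0)\setminus V$ and two in $\dom(p_1)\setminus V$. Proposition~\ref{equiproposition} then tells you the two $p_1$-vertices are $E_{\supp(p_0)}$-equivalent, but $p_1\leq c$ only guarantees injectivity of $p_1$ on $E_{\mathbb{R}\cap V}$-classes. So you need the descent $E_{\supp(p_0)}\restriction V[G_1]=E_{\mathbb{R}\cap V}\restriction V[G_1]$ (and symmetrically), which is also exactly what clauses (b) and (c) require. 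This descent is the paper's Claim~\ref{helpclaim}, and it is not a routine bookkeeping fact about equivalence relations: it is proved by first invoking Proposition~\ref{productproposition} to get $\dim(V[G_0]/V[G_1])\geq n$ for the mutually generic pair, and then Proposition~\ref{iproposition} to replace the $V[G_0]$-visible algebraic sets and polynomial maps witnessing $E_{\supp(p_0)}$-equivalence of two points of $V[G_1]$ by ground-model-visible ones. That is where the entire Krull-dimension apparatus of the paper enters the proof, and your proposal contains no substitute for it; ``combining (ii) and (iii) across the two conditions, using Proposition~\ref{equiproposition}'' does not produce the needed transfer. (Your clause (iii) in the unpacking of $p\leq\check c$ is also not part of Definition~\ref{pndefinition}; the ordering only constrains $p$ on $E_{\supp(c)}$-classes, not $c$ on $E_{\supp(p)}$-classes.)

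Your outline of items (2), (3) and the CH equivalence matches the paper's route in spirit: define $c(x)$ by a balance argument, show $Q\Vdash\gs\leq\check c$, and for the CH direction run an $\gw_1$-recursion below a given condition using Proposition~\ref{compproposition} at successor steps (note it is not enough to cite the bare existence of some total coloring; you need one \emph{below the given condition} $p$, which is what the recursion provides). But as written, item (1) does not go through without the dimension-transfer lemma.
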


\begin{proof}
For item (1), let $c\colon \mathbb{R}^n\to\gw$ be a total $\Gamma_n$-coloring. Let $V[G_0], V[G_1]$ be mutually generic extensions and $p_0, p_1\leq c$ be conditions in $P_n$ in the respective models $V[G_0]$ and $V[G_1]$; I must show that these conditions have a common lower bound. Write $E=E_{\mathbb{R}\cap V}$, $E_0=E_{\mathbb{R}\cap V[G_0]}$, and $E_1=E_{\mathbb{R}\cap V[G_1]}$. The following claim is key.

\begin{claim}
\label{helpclaim}
$E_0\restriction V[G_1]= E\restriction V[G_1]$ and vice versa, $E_1\restriction V[G_0]= E\restriction V[G_0]$.
\end{claim}

\begin{proof}
I prove the first assertion, as the proof of the second is symmetric. The right-to-left inclusion follows from the fact that $V\subset V[G_0]$ holds. For the key left-to-right inclusion, suppose that $x, y\in V[G_1]\setminus V$ are $E_0$-related. This means that there are algebraic sets $A, B\subset\mathbb{R}^n$ visible from $V[G_0]$, distinct from $\mathbb{R}$, containing $x$ and $y$ respectively, and polynomial functions $f, g\colon \mathbb{R}^n\to\mathbb{R}^n$  visible from $V[G_0]$ such that $f(x)=y$ and $g(y)=x$. Now, by Proposition~\ref{productproposition}, $\dim(V[G_0]/V[G_1])\geq n$ holds. By Proposition~\ref{iproposition} there are algebraic sets $A'\subseteq A$, $B'\subseteq B$ visible from $V[G_0]$ and containing $x, y$ respectively, and polynomial functions $f', g'\colon\mathbb{R}^n\to\mathbb{R}^n$ visible in $V$ such that $f\restriction A'=f'\restriction A'$ and $g\restriction B'= g'\restriction B'$. These objects show that $x, y$ are $E$-related as desired.
\end{proof}

\noindent Now it is time to verify item (3) of Proposition~\ref{compproposition} for $p_0, p_1$. First of all, $p_0\cup p_1$ is a function since $p_0\restriction V=p_1\restriction V=c$ and $\dom(p_0)\cap\dom(p_1)\subset V$ holds by the product forcing theorem. To show that $p_0\cup p_1$ is a $\Gamma_n$-coloring, assume that $R$ is a non-degenerate rectangle in its domain and work to show that it is not monochromatic. There are two possible configurations. If one of the conditions $p_0, p_1$ (say $p_0$) contains three points of $R$, then by the closure properties of $\dom(p_0)$ it contains the fourth one as well and $R$ is not monochromatic since $p_0$ is a $\Gamma_n$-coloring. The only remaining option is that $p_0\setminus V$ contains two points (say $x, y$) of the rectangle and $p_1\setminus V$ contains the remaining two (say $z, w$). By Proposition~\ref{equiproposition}, $z, w$ are $E_0$-related, and by Claim~\ref{helpclaim}, they are $E$-related. Then, $p_1(z)\neq p_1(w)$ follows from $p_1\leq c$ and $R$ is not monochromatic. The definition of the ordering $\leq$ in Definition~\ref{pndefinition} is motivated exactly by this step.

Finally, we must show that $p_0''C$ is an injection with range in $I$ for any $E_0$-class $C$ and vice versa, $p_1''D$ is an injection with range in $I$ for any $E_1$-class $D$. The classes $C, D$ are in fact $E$-classes by Claim~\ref{helpclaim}, so this follows from $p_0, p_1\leq c$. Item (3) of Proposition~\ref{compproposition} has been verified, which implies the compatibilty of the conditions $p_0, p_1$ and item (1) of the current proposition.

For item (2), suppose that $\langle Q, \gs\rangle$ is a balanced pair. Strengthening the condition $\gs$ and the poset $Q$ if necessary, I may assume that $Q\Vdash \mathbb{R}\cap V\subset\dom(\gs)$. A balance argument then shows that for each point $x\in\mathbb{R}^n$ there is a specific number $c(x)\in\gw$ such that $Q\Vdash\gs(\check x)=c(x)$. It is immediate that $c\colon\mathbb{R}^n\to\gw$ is a $\Gamma_n$-coloring. I will show that $Q\Vdash\gd\leq\check c$; this will prove that the balanced pairs $\langle Q, \tau\rangle$ and $\langle \coll(\gw, \mathbb{R}^n), \check c\rangle$ are equivalent by \cite[Proposition 5.2.6]{z:geometric} as required. Suppose towards a contradiction that there is a condition $q\in Q$ forcing $\gs\not\leq\check c$. Let $G_0, G_1\subset Q$ be mutually generic filters containing the condition $q$, and write $p_0=\gs/G_0$ and $p_1=\gs/G_1$. Since $p_0\not\leq \check q$, the definition of the ordering in Definition~\ref{pndefinition} shows that there must be a $E_{\mathbb{R}}$-class $C\subset\mathbb{R}^n$ such that $p_0\restriction C$ is not an injection with range in the ideal $I$. Since $\mathbb{R}\cap V\subset\supp(p_1)$ holds, $E_{\mathbb{R}\cap V}\subset E_{\supp(p_1)}$ holds as well and $C$ is a subset of a single $E_{\dom(p_1)}$-class. This shows that the conditions $p_0, p_1$ are incompatible, contradicting the initial assumptions on $Q, \gs$.

Item (3) is immediate. For the last sentence, if the Continuum Hypothesis fails then there are no total $\Gamma_n$-colorings by the result of \cite{komjath:three}, so no balanced virtual conditions by (2) of the present proposition. On the other hand, assume that the Continuum Hypothesis holds and $p\in P_n$ is a condition; I must produce a total $\Gamma_n$-coloring $c$ such that $c\leq p$.
To this end, let $\langle x_\ga\colon \ga\in\gw_1\rangle$ be an enumeration of $\mathbb{R}^n$. By recursion on $\ga$ construct a descending sequence $\langle p_\ga\colon\ga\in\gw_1\rangle$ of conditions such that $p_0=p$, $x_\ga\in \dom(p_{\ga+1})$, and $p_\ga=\bigcup_{\gb\in \ga}p_\gb$. This is straightforward; at the successor stage use Proposition~\ref{compproposition} with the two conditions equal to $p_\ga$. In the end, the coloring $c=\bigcup_\ga p_\ga$ completes the proof.
\end{proof}

\noindent Finally, I must provide the dimension characteristics of the virtual balanced conditions in the poset $P_n$.

\begin{proposition}
\label{dimbalancedproposition}
In the poset $P_n$, every balanced virtual condition is in fact $\langle n+1, n+1, 1\rangle$-dimensionally balanced.
\end{proposition}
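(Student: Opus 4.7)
By Proposition~\ref{balancedproposition}(2), every balanced virtual condition in $P_n$ is equivalent to one of the form $\langle \coll(\gw,\mathbb{R}^n),\check c\rangle$ for some total $\Gamma_n$-coloring $c$, so it is enough to verify $\langle n+1,n+1,1\rangle$-dimensional balance for this shape. Fix such $c$, a quadruple $\langle V[G_i]\colon i\in 4\rangle$ of generic extensions of dimension characteristic dominating $\langle n+1,n+1,1\rangle$, and conditions $p_i\in V[G_i]$ with $p_i\leq c$. The plan is to imitate the proof of Proposition~\ref{balancedproposition}(1): verify the four-condition analogue of clause~(3) of Proposition~\ref{compproposition}, and then construct a common lower bound by extending $p_0\cup p_1\cup p_2\cup p_3$ to $F^n$ for a countable real closed field $F\supseteq\bigcup_i\supp(p_i)$, following the recipe in the proof of Proposition~\ref{compproposition}.

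The central technical ingredient will be a four-model analogue of Claim~\ref{helpclaim}: for any $i\neq j$, $E_{\mathbb{R}\cap V[G_i]}\restriction V[G_j]=E_{\mathbb{R}\cap V}\restriction V[G_j]$, proved verbatim from Claim~\ref{helpclaim} since the single/single entry of the dimension characteristic is $\geq n+1$, supplying Proposition~\ref{iproposition} with the hypothesis it needs. From this, $p_0\cup\dots\cup p_3$ is immediately a function: overlaps $\dom(p_i)\cap\dom(p_j)$ lie in $V[G_i]\cap V[G_j]=V$ by Proposition~\ref{twoproposition} via single/single dimension $\geq 2$, and on ground-model points all four $p_j$ agree with $c$. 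The $E$-class injection clauses of the four-condition compatibility criterion follow from $p_j\leq c$ together with the key lemma in the same fashion.

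The $\Gamma_n$-coloring property of $p_0\cup\dots\cup p_3$ is verified by case analysis on how a putative monochromatic non-degenerate rectangle $R=\{v_0,\dots,v_3\}\subseteq\bigcup_i\dom(p_i)$ distributes among the supports $\supp(p_i)$, with vertices lying in $V$ treated separately. Rectangle-closure of real closed fields handles every configuration in which three vertices lie in one $\supp(p_i)^n$, reducing to the 2-2, 2-1-1, and 1-1-1-1 distributions of the non-$V$ vertices. The 2-2 case is identical to the two-model argument in Proposition~\ref{balancedproposition}(1). For 2-1-1, applying Proposition~\ref{equiproposition} with $F=\mathbb{R}\cap V[G_i]$ at the ``two'' index $i$ produces a polynomial in $V[G_i]$ sending one singleton vertex in $V[G_j]\setminus V$ to the other in $V[G_k]\setminus V$; the image then lies in $V[G_i,G_j]\cap V[G_k]$, forced to be $V$ by the pair/single dimension and Proposition~\ref{twoproposition}, a contradiction.

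The 1-1-1-1 case, in which each $v_i\in\dom(p_i)\setminus V$, is the main obstacle. Here I plan to choose the pair $\{i,j\}$ so that $\{v_k,v_l\}$ is the opposite diagonal of $R$, apply Proposition~\ref{equiproposition} with the pair-model field $F=\mathbb{R}\cap V[G_i,G_j]$ (the alternative $v_k,v_l\in F^n$ is again killed by the pair/single dimension), and then combine the resulting polynomial $f\in V[G_i,G_j]$ with the $V$-integer-linear rectangle identity $v_k+v_l=v_i+v_j$ and an invocation of Proposition~\ref{iproposition} applied with big model $V[G_i,G_j]$ (using pair/single dimension $\geq n+1$) to force one of $v_k,v_l$ into $V$, yielding the desired contradiction. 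This is expected to be the hardest step: the ``image'' vertex $v_l$ does not a priori lie in the big model $V[G_i,G_j]$, and since the triple/single entry of the dimension characteristic is only~$1$, the naive Proposition~\ref{iproposition} invocation is blocked; the rigidity of the integer-coefficient rectangle identity, together with a judicious choice of the pairing $\{i,j\}$, has to bridge the gap. Once the coloring property is established, the construction of the common lower bound is exactly the four-condition version of the recipe in Proposition~\ref{compproposition}: pick an infinite $b\in I$ avoiding all the relevant finitely many ranges $\rng(p_j\restriction C)$ as $C$ ranges over the $E_{\supp(p_i)}$-classes and $i\neq j$, and extend $p_0\cup\dots\cup p_3$ to a coloring $q\colon F^n\to\gw$ that takes values in $b$ outside of $\bigcup_i\dom(p_i)$.
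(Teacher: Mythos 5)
Your overall architecture matches the paper's: reduce to $\bar p=\langle \coll(\gw,\mathbb{R}^n),\check c\rangle$, prove a four-condition compatibility criterion by cases on how the rectangle's vertices distribute among the $\dom(p_i)$, and build the lower bound as in Proposition~\ref{compproposition}. The 2--2 and 2--1--1 cases are handled correctly (the paper disposes of 2--1--1 even more cheaply: the fourth vertex is an integer affine combination of the other three, so it lies in $V[G_i,G_j]\cap V[G_k]=V$). But the 1--1--1--1 case is left as an acknowledged gap, and it is a genuine one: as you say yourself, Proposition~\ref{iproposition} requires the \emph{image} $f(x)$ to lie in the big model, and $v_l=f(v_k)$ is not known to lie in $V[G_i,G_j]$, so the invocation as planned does not go through. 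The paper closes this case by a different, non-functional argument: it picks two adjacent vertices $x_0,x_1$, observes that $x_2$ lies on the hyperplane $H\in V[G_0,G_1]$ through $x_1$ perpendicular to $x_0-x_1$, applies Proposition~\ref{iproposition} (with pair/single dimension $\geq n+1$) to shrink $H$ to a ground-model algebraic set $K\ni x_2$, and then reconstructs $x_2$ inside $V[G_3]$ as the nearest point of $K$ to $x_3$, contradicting $V[G_2]\cap V[G_3]=V$. For what it is worth, your own route can also be completed by the ``rigidity'' you gesture at: apply Proposition~\ref{iproposition} not to $f$ but to $h=f+\mathrm{id}$, since $h(v_k)=v_k+v_l=v_i+v_j$ \emph{does} lie in $V[G_i,G_j]$; the resulting ground-model polynomial $g'$ with $g'(v_k)=v_k+v_l$ puts $v_k+v_l$, hence $v_l$, into $V[G_k]$, contradicting $V[G_k]\cap V[G_l]=V$. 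Either way, the step has to be written out; it is the heart of the proposition.

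A second, smaller defect: your key lemma is stated only in the single-model form $E_{\mathbb{R}\cap V[G_i]}\restriction V[G_j]=E\restriction V[G_j]$, but both the construction of the lower bound and the mixed case (one vertex outside $\bigcup_i\dom(p_i)$, the other three spread over three distinct conditions) require the \emph{pair-field} version: $E_{F_{ij}}\restriction(\dom(p_k)\setminus V)=E\restriction(\dom(p_k)\setminus V)$ for $k\notin\{i,j\}$, where $F_{ij}$ is generated by $\supp(p_i)\cup\supp(p_j)$. This is exactly where the second entry $n+1$ of the characteristic (pair over single) is consumed, and it is why the color reservation in the lower-bound construction must avoid ranges $p_k''C$ for $C$ an $E_{F_{ij}}$-class, not merely an $E_{\supp(p_i)}$-class; reserving only against single-support classes, as you propose, leaves that mixed case uncolored. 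The proof of the pair version is the same Claim~\ref{helpclaim}-style argument, so this is fixable, but as written your criterion is too weak.
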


\begin{proof}
Suppose that $c\colon\mathbb{R}^n\to\gw$ is a total coloring. Suppose that $\{V[G_i]\colon i\in 4\}$ is a finite collection of generic extensions such that for any three distinct indices $i, j, k\in 4$ it is the case that $\dim(V[G_i, G_j]/V[G_k])\geq n+1$ holds. Suppose that $\{p_i\colon i\in 4\}$ is a collections of conditions in $P_n$ such that $p_i\in V[G_i]$ and $p_i\leq c$ holds;
I must produce a common lower bound of the conditions $p_i$ for $i\in 4$. 

Note that the dimension assumption implies that for pairwise distinct indices $i, j, k\in 4$, $\mathbb{R}\cap V[G_i, G_j]\cap V[G_k]=\mathbb{R}\cap V$ holds by Proposition~\ref{twoproposition}. To set up useful notation, suppose that $i, j\in 4$ are possibly equal indices. Write $F_{ij}\subset\mathbb{R}$ for the smallest subfield of $\mathbb{R}$ containing $\supp(p_i)$ and $\supp(p_j)$ and write $E_{ij}=E_{F_{ij}}$. Also, write $E=E_{\mathbb{R}\cap V}$. The following claim is proved exactly like Claim~\ref{helpclaim} using the assumption on the dimension of the quadruple $\langle V[G_i]\colon i\in 4\rangle$.

\begin{claim}
\label{rclaim}
Let $i, j\in 4$ and let $k$ be distinct from both $i$ and $j$. Then ${E_{ij}}\restriction\dom(p_k)\setminus V=E\restriction\dom(p_k)\setminus V$.
\end{claim}

\noindent To find the requested lower bound, work in the model $V[G_i\colon i\in 4]$ and find a countable subfield $F\subset\mathbb{R}$ containing $\supp(p_i)$ for every $i\in 4$. Write $d=F^n\setminus \dom(\bigcup_ip_i)$. Let $b\subset\gw$ be a set in the ideal $I$ which cannot be covered by a union of finitely many singletons and sets of the form $p_i''C$ where $i\in 4$ and $C\subset\dom(p_i)\setminus V$ is an $E$-class. Note that all of the latter sets belong to the ideal $I$ as $p_i\leq c$ is assumed. Let $q\colon F^n\to\gw$ be any function extending $\bigcup_ip_i$ such that $q\restriction d$ is an injection, and for each point $x\in d$, $q(x)\in b$ and $q(x)$ does not belong to any set of the form $p''_kC$ where $k\in 4$ and $C$ is an $E_{ij}$-class of $x$ for some $i, j\in 4$ distinct from $k$. This is possible by Claim~\ref{rclaim} and the choice of the set $b$. I will show that $q$ is the requested common lower bound of the conditions $p_i$ for $i\in 4$. 

The key point is showing that $q$ is in fact a $\Gamma_n$-coloring. Let $R$ be a rectangle in $\dom(q)$ and work to show that it is not $q$-monochromatic. There are several configurations to investigate.

\noindent\textbf{Case 1.} Suppose that $R\subset\bigcup_{i\in 4}\dom(p_i)$. Let $a\subset 4$ be an inclusion-minimal set such that $R\subset \bigcup_{i\in a}\dom(p_i)$. Divide into subcases:

\noindent\textbf{Case 1.1.} $a$ is a singleton, containing some $i\in 4$. In this case, $R$ is not monochromatic since $p_i$ is a $\Gamma_n$-coloring.

\noindent\textbf{Case 1.2.} $a$ contains exactly two elements $i, j\in 4$. By the closure properties of $\dom(p_i)$ and $\dom(p_j)$, it is impossible for one of them to contain three vertices of $R$, since then it would contain the fourth one also, and Case 1.1 would be in effect. Thus, $\dom(p_i)$ contains vertices $x, y$ and $\dom(p_j)$ contains vertices $z, w$, and none of these vertices can be in $V$.  By Proposition~\ref{equiproposition}, the points $z, w$ are $E_{i}$-equivalent. By Claim~\ref{rclaim}, the points are in fact $E$-equivalent. Finally, since $p_j\leq c$, the points $z, w$ receive distinct $p_j$-color, so $R$ is not monochromatic.

\noindent\textbf{Case 1.3.} $a$ contains exactly three elements $i, j, k\in 4$. Then one of the conditions (say $p_i$) contains two vertices $x, y\in R$, and $\dom(p_j)\setminus V$ and $\dom(p_k)\setminus V$ contain a single element $z$ and $w\in R$ respectively. By the dimension assumption $V[G_i, G_j]\cap V[G_k]=V$ must hold. At the same time, the point $w\in \dom(p_k)\setminus V$ is a linear combination of $x, y, z$, so it should belong to the model $V[G_i, G_j]$, contradicting the case assumption. This case cannot occur no matter whether $R$ is monochromatic or not.

\noindent\textbf{Case 1.4.} $a=4$. Let $x_i\in\dom(p_i)$ for $i\in 4$ be the corresponding points in the rectangle $R$; note that it must be the case that $x_i\in\dom(p_i)\setminus V$. Without loss assume that $x_0$ and $x_1$ are  points adjacent in $R$. Note that the point $x_2$ belongs to the hyperplane $H$ perpendicular to the segment connecting $x_0$ and $x_1$ which contains either $x_0$ or $x_1$. This hyperplane belongs to the model $V[G_0, G_1]$. Since $\dim(V[G_i, G_j]/V[G_k])\geq n+1$, by Proposition~\ref{iproposition} there must be an algebraic set $K\subset H$ in the ground model containing the point $x_2$. Then, the point $x_2$ can be reconstructed in $V[G_3]$ as the closest point on $K$ to $x_3$, since it is even the closest point on $H$ to $x_3$. This means that $x_2\in V[G_3]$, contradicting the case assumption. Thus, this case cannot occur no matter whether $R$ is monochromatic or not.

\noindent\textbf{Case 2.} Suppose that $R$ has exactly one vertex (call it $x$) in the set $d$. Let $a\subset 4$ be an inclusion-minimal set such that the remaining vertices of $R$ are contained in $\bigcup_{i\in a}\dom(p_i)$.

\noindent\textbf{Case 2.1.} $a$ is a singleton containing some index $i$. By the closure properties of $\dom(p_i)$, with the three vertices of $R$ $\dom(p_i)$ would have to contain even $x$, and Case 1.1 would be in force.

\noindent\textbf{Case 2.2.} $a$ contains exactly two elements $i, j\in 4$. Then one of the conditions (say $p_i$) contains two elements of $R$, while $\dom(p_j)\setminus V$ contains a single vertex $y$ of $R$. Note that $y$ is $E_{i}$-equivalent to $x$ by Proposition~\ref{equiproposition}. By the choice of the function $q$, $q(x)\neq q(y)$ holds and $R$ is not monochromatic.

\noindent\textbf{Case 2.3.} $a$ contains exactly three elements $i, j, k\in 4$. Then each of $\dom(p_i)\setminus V$, $\dom(p_j)\setminus V$, and $\dom(p_k)\setminus V$ must contain exactly one vertex of $R$. Let $y$ be the unique vertex of $R$ in $\dom(p_k)\setminus V$. Then $x, y$ are $E_{ij}$-equivalent by Proposition~\ref{equiproposition}, $q(x)\neq q(y)$ holds and $R$ is not monochromatic.

\noindent\textbf{Case 3.} Suppose finally that $R$ has more than one vertex in the set $d$. Then $R$ is not $q$-monochromatic since $q\restriction d$ is an injection.

Finally, I need to show that for every $i\in 4$, $q\leq p_i$ holds. It is clear that $p_i\subset q$ holds. Now, fix an $E_{i}$-class $C\subset\dom(q\setminus p_i)$ and work to show that $q\restriction C$ is an injection with range in the ideal $I$. 

First observe that there is at most one index $j\in a$ such that $C\cap\dom(p_j)\neq 0$. If there were two such indices $j, k$, with points $x_j\in\dom(p_j)\cap C$ and $x_k\in\dom(p_k)\cap C$, then use the definition of the equivalence $E_i$ to show that $x_k\in V[G_i, G_j]$, which is impossible as $V[G_k]\cap V[G_i, G_j]=V$. Thus, let $j$ be the unique index (if it exists) such that $C\cap \dom(p_j)\neq 0$. By Claim~\ref{rclaim}, $C\cap \dom(p_j)$ is a single $E$-class disjoint from $V$, so $p_j\restriction (C\cap\dom(p_j))$ is an injection with range in $I$. Also, $q\restriction C\setminus\dom(p_j)$ is an injection with range in $I$, as $C\setminus\dom(q_j)\subset d$ and $q\restriction d$ is an injection with range included in the set $b\in I$. Finally, for every point $x\in C\setminus\dom(p_j)$ the color $q(x)$ was chosen exactly so that it is different from all colors in the set $p_j''(C\cap \dom(p_j))$. Thus $q\restriction C$ is an injection with range in the ideal $I$ as desired.
\end{proof}

\noindent Finally, I am in a position to prove Theorem~\ref{maintheorem}. Let $n\geq 2$ be a number. Let $\kappa$ be an inaccessible cardinal, and let $W$ be the choiceless Solovay model derived from $\kappa$. Let $G\subset P_n$ be a filter generic over $W$, and consider the model $W[G]$. The poset $P_n$ is $\gs$-closed, therefore the model $W[G]$, as a $\gs$-closed extension of a model of DC, satisfies DC as well. By Corollary~\ref{unioncorollary}, in $W[G]$ the chromatic number of $\Gamma_n$ is countable. By the conjunction of Propositions~\ref{balancedproposition} and~\ref{dimbalancedproposition} and Theorem~\ref{preservationtheorem}, in the model $W[G]$, every non-meager subset of $\mathbb{R}^{n+1}$ contains vertices of a non-degenerate rectangle, theorefore the chromatic number of $\Gamma_{n+1}$ is uncountable. Theorem~\ref{maintheorem} has just been proved.

\bibliographystyle{plain} 
\bibliography{odkazy,zapletal,shelah}

\end{document}